\newtheorem{theorem}{Theorem}[section]
\newtheorem{definition}[theorem]{Definition}
\newtheorem{lemma}[theorem]{Lemma}
\newtheorem{prop}[theorem]{Proposition}
\newtheorem{remark}[theorem]{Remark}
\newtheorem{hy}[theorem]{Assumption}
\title{Forward-backward stochastic differential equations driven by G-Brownian motion}
\author[a]{Huan Lu}
\author[a]{Yongsheng Song \thanks{E-mail addresses: hlu@amss.ac.cn (H. Lu), yssong@amss.ac.cn (Y. Song).}}
\affil[a]{Academy of Mathematics and System Science, CAS, Beijing, China}
\date{ }
\begin{document}
	
	\maketitle
	
	\begin{abstract}
		In this paper, we study the existence and uniqueness of solutions to the fully coupled nonlinear forward-backward stochastic differential equations driven by G-Brownian motion in the following form, 
		\begin{equation*}
			\left\{
			\begin{array}{l}
				dX_t = b(t,X_t,Y_t) dt + h(t,X_t,Y_t) d\langle B \rangle_t + \sigma(t,X_t,Y_t) dB_t, \\
				dY_t = -f(t,X_t,Y_t,Z_t) dt - g(t,X_t,Y_t,Z_t) d \langle B \rangle_t + Z_t dB_t + dK_t, \\
				X_0 = x, \quad Y_T = \Phi(X_T), 
			\end{array}
			\right.
		\end{equation*}
		where $(B_t)_{t \geq 0}$ is a G-Brownian motion. Assuming that $\sigma$ is uniformly elliptic and coefficients are all differentiable, combining the results of fully nonlinear PDEs, we prove the existence and uniqueness of solutions to the equations above. 
	\end{abstract}
	
	\begin{keywords}
		forward-backward SDEs; G-expectation; G-Brownian motion; existence and uniqueness. 
	\end{keywords}
	
	\section{Introduction}
	
	In recent decades, forward-backward stochastic differential equations(FBSDEs, for short) in the Wiener probability space, which has a general form as 
	\begin{equation*}
		\left\{
		\begin{array}{l}
			dX_t = b(t,X_t,Y_t,Z_t) dt + \sigma(t,X_t,Y_t,Z_t)dW_t, \\
			dY_t = -f(t,X_t,Y_t,Z_t)dt + Z_tdW_t, \\
			X_0 = x, \quad Y_T = g(X_T), 
		\end{array}
		\right.
	\end{equation*}
	has been widely studied, where $(W_t)_{t \geq 0}$ denotes the standard Brownian motion. Many results, such as the well-posedness and regularity of their solutions, have been obtained by various means. There are three main methodologies to discuss the existence and uniqueness of the solutions to these FBSDEs: small time duration, the method of continuation, and the four-step scheme. \par 
	The fully coupled FBSDEs of the case $\sigma(t,x,y,z) = \sigma_0(t,x,y)$, was first researched by Antonelli(\cite{smallt}) in the early 1990s. By the Lipschitz property of the coefficients, Antonelli construct a contraction map when the time duration $T$ is small enough, which implies the unique solvability of FBSDEs. \par 
	The method of continuation was introduced in \cite{G-mon1, G-mon2, G-mon3} to discuss the solvability of fully coupled nonlinear FBSDEs. In these works, they introduced a so-called G-monotone conditions of coefficients, by which they can easily get the uniqueness of the solution if it exists. They then use this method to connect a family of FBSDEs, which have the same solvability, to get the existence of those solutions. \par 
	Unlike the two methods above, Ma, Protter and Yong (\cite{4step}) gave the solvability of FBSDEs based on the results of PDEs. Their approach is called the four-step scheme, which is extended to wider situations by Delarue (\cite{fbsdes}), where he supposed that $\sigma$ is independent of $z$ and is uniformly elliptic. More specifically, let $u$ be a solution to the PDE associated with the FBSDE, then we can solve the forward SDE by replacing $Y_t$ and $Z_t$ with $u(t,X_t)$ and $u_x(t,X_t)\sigma(t,X_t,u(t,X_t))$, respectively. In this way, $(X_t,Y_t,Z_t)$ is a solution to the FBSDE, where $Y_t = u(t,X_t)$ and $Z_t = u_x(t,X_t)\sigma(t,X_t,u(t,X_t))$. \par 
	Notice that the assumptions appearing in these three means cannot imply each other. For instance, method of continuation requires the G-monotone of coefficients, while Antonelli only needed the Lipschitz condition; both Antonelli and Delarue required $\sigma$ to be independent of $z$, but Delarue needed more  restrictions on $\sigma$ to get the solvability for arbitrary time duration $T$. Therefore, there is no unified approach to solve the problem. \par 
	We have already known that there is a close connection between PDEs and FBSDEs. Briefly speaking, the solution to a non-degenerate linear second-order parabolic partial differential equation can be expressed by a diffusion process, while the solution to quasi-linear ones can be represented by the solutions to FBSDEs. Nevertheless, in the linear expectation framework, there is no probabilistic representation for the solutions to fully nonlinear PDEs, because linear expectation cannot reflect the nonlinearity of these PDEs. Thanks to the G-expectation and G-Brownian motion introduced by Peng (\cite{G-expectation1, G-BM, G-expectation2, nonlinearexp}), the solutions to these PDEs have probabilistic representation with the help of FBSDEs in the G-expectation framework. In addition, FBSDEs can be obtained by applying stochastic maximum principle to optimal control problems. When the optimal control problems are considered in the G-expectation framework, which usually occurs in economics and finance, the associated FBSDEs should be driven by G-Brownian motion. \par 
	However, in the G-expectation framework, things go more complex. The solvability of SDEs and BSDEs are given by Peng (\cite{G-BM, nonlinearexp}), Bai and Lin (\cite{GSDE}), and Hu, Ji, Peng and Song (\cite{bsde}) respectively in recent years. To the best of our knowledge, there are few researchers studying FBSDEs under the G-expectation framework, named forward-backward stochastic differential equations driven by G-Brownian motion (FBGSDEs, for short), much less for those fully coupled ones. \par 
	Motivated by the aforementioned reasons, in this paper, we will discuss the solvability of FBGSDEs. Specifically, given a G-expectation space $(\Omega, \mathcal H, \hat{\mathbb E})$ and let $\{B_t\}_{t \geq 0}$ be a $d$-dimensional G-Brownian motion on it, we study whether there is a unique solution $(X_t, Y_t, Z_t, K_t)_{0 \leq t \leq T}$ to the following equation 	
	\begin{equation} \label{eq}
		\left\{
		\begin{array}{l}
			dX_t = b(t,X_t,Y_t) dt + h(t,X_t,Y_t) d\langle B \rangle_t + \sigma(t,X_t,Y_t) dB_t, \\
			dY_t = -f(t,X_t,Y_t,Z_t) dt - g(t,X_t,Y_t,Z_t) d \langle B \rangle_t + Z_t dB_t + dK_t, \\
			X_0 = x, \quad Y_T = \Phi(X_T), 
		\end{array}
		\right.
	\end{equation}
	where $X, Y, Z, K$ take values in $\mathbb R^n, \mathbb R^m, \mathbb R^{m \times d}, \mathbb R^m$, while $b,h,\sigma,f,g,\Phi$ take values in $\mathbb R^n, \mathbb R^n, \mathbb R^{n \times d}, \mathbb R^m, \mathbb R^m, \mathbb R^m$, respectively. For simplify, we only consider the case of $d=m=n=1$, and the multidimensional case is similar. The main difficulties in our work are the lack of dominated convergence theorem in the G-expectation framework and the asymmetry of G-martingale. In order to overcome these obstacles, in our paper, we use uniformly convergence and the general Doob's type inequality which needs higher moment estimations. \par 
	The rest of this paper is organized as follows. In section \ref{pre}, we shall present some preliminaries for the G-expectation framework and introduce the stochastic calculus on it. In section \ref{eusmall}, we give the solvability of FBGSDEs (\ref{eq}) on a small time duration, where we will also give a main estimation which plays a key role in our proof of the main theorem in this paper. In section \ref{nondeg}, some results of PDEs will be presented, and the existence and uniqueness of solutions to FBGSDEs (\ref{eq}) for arbitrary $T$, with non-degenerate diffusion, is provided, which is the main work of this paper. 
	
	\section{Preliminaries} \label{pre}
	
	In this section, we review some basic notations and results of sublinear expectation space and G-expectation, the readers may refer to \cite{G-expectation1, G-BM, G-expectation2, nonlinearexp, bsde, properties} for more details. 
	
	\subsection{Sublinear expectation space and G-expectation}
	
	Let $\Omega$ be a given set, and let $\mathcal H$ be a vector lattice of real valued functions defined on $\Omega$, which satisfies 
	\begin{enumerate}[(i)]
		\item $c \in \mathcal H$, for every $c \in \mathbb R$, 
		\item $|X| \in \mathcal H$, for any $X \in \mathcal H$. 
	\end{enumerate}
	In this article, the space $\mathcal H$ will be used as the space of random variables. 
	
	\begin{definition}
		A sublinear expectation is a function $\hat{\mathbb E} \colon \mathcal{H} \to \mathbb R$, satisfying 
		\begin{enumerate}[(i)]
			\item $\hat{\mathbb E} X \leq \hat{\mathbb E} Y$, for any $X \leq Y$, 
			\item $\hat{\mathbb E} c = c$, for any $c \in \mathbb R$, 
			\item $\hat{\mathbb E}[X+Y] \leq \hat{\mathbb E}X + \hat{\mathbb E}Y$, 
			\item $\hat{\mathbb E}[\lambda X] = \lambda \hat{\mathbb E}X$, for any $\lambda \geq 0$. 
		\end{enumerate}
		We then call the triple $(\Omega, \mathcal{H}, \hat{\mathbb E})$ a sublinear expectation space. 
	\end{definition}
	
	\begin{theorem}
		Let $\hat{\mathbb E}$ be a sublinear expectation on $\mathcal{H}$, then there exists a family of linear expectations $\{ \mathbb E_\theta \colon \theta \in \Theta \}$, such that 
		\begin{equation*}
			\hat{\mathbb E} X = \sup_{\theta \in \Theta} \mathbb E_\theta X. 
		\end{equation*}
	\end{theorem}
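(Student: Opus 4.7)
The plan is to derive the representation directly from the Hahn--Banach extension theorem, since axioms (iii) and (iv) say precisely that $\hat{\mathbb E}$ is a sublinear functional (subadditive and positively homogeneous) on the real vector space $\mathcal H$. I will construct, for every $X_0 \in \mathcal H$, a linear expectation $\mathbb E_{X_0}$ that is dominated by $\hat{\mathbb E}$ everywhere and attains $\hat{\mathbb E}[X_0]$ at $X_0$; then indexing $\Theta := \mathcal H$ with $\theta \mapsto \mathbb E_\theta$ will give the desired family.

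First I would pick $X_0 \in \mathcal H$, consider the one-dimensional subspace $\mathbb R X_0$, and define $\ell(tX_0) := t\,\hat{\mathbb E}[X_0]$. A short check shows $\ell \leq \hat{\mathbb E}$ on $\mathbb R X_0$: for $t \geq 0$ this is the equality $t\,\hat{\mathbb E}[X_0] = \hat{\mathbb E}[tX_0]$ coming from (iv), and for $t < 0$ it reduces to $-\hat{\mathbb E}[X_0] \leq \hat{\mathbb E}[-X_0]$, which follows from subadditivity applied to $X_0 + (-X_0) = 0$ together with $\hat{\mathbb E}[0] = 0$ (an immediate consequence of (ii) or (iv)).

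Second, the algebraic form of the Hahn--Banach theorem extends $\ell$ to a linear functional $\mathbb E_{X_0}$ on all of $\mathcal H$ with $\mathbb E_{X_0}[X] \leq \hat{\mathbb E}[X]$ for every $X \in \mathcal H$, and by construction $\mathbb E_{X_0}[X_0] = \hat{\mathbb E}[X_0]$. It remains to verify that $\mathbb E_{X_0}$ is a linear expectation in the usual sense, i.e.\ monotone and normalized. If $X \geq 0$, then $-X \leq 0$, so $-\mathbb E_{X_0}[X] = \mathbb E_{X_0}[-X] \leq \hat{\mathbb E}[-X] \leq \hat{\mathbb E}[0] = 0$ by (i), whence $\mathbb E_{X_0}[X] \geq 0$. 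For constants, $\mathbb E_{X_0}[c] \leq \hat{\mathbb E}[c] = c$ and $-\mathbb E_{X_0}[c] = \mathbb E_{X_0}[-c] \leq \hat{\mathbb E}[-c] = -c$ together force $\mathbb E_{X_0}[c] = c$ by (ii).

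Finally, set $\Theta := \mathcal H$ and assign to each $\theta \in \Theta$ the linear expectation $\mathbb E_\theta$ produced above. Domination gives $\sup_{\theta \in \Theta} \mathbb E_\theta[X] \leq \hat{\mathbb E}[X]$ for every $X \in \mathcal H$, while taking $\theta = X$ yields $\mathbb E_X[X] = \hat{\mathbb E}[X]$, forcing equality and completing the proof. There is no real obstacle beyond invoking Hahn--Banach in its purely algebraic form, since $\mathcal H$ is not assumed to carry any topology; the only care needed is to check monotonicity of each extension, which is automatic once one has the two-sided bound $-\hat{\mathbb E}[-X] \leq \mathbb E_{X_0}[X] \leq \hat{\mathbb E}[X]$.
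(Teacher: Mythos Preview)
Your proof is correct and is exactly the standard Hahn--Banach argument; the paper itself does not supply a proof of this theorem, stating it only as a preliminary result drawn from the cited references (Peng's monograph), where the same Hahn--Banach construction is used.
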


	\begin{definition}
		Let $(\Omega,\mathcal{H},\hat{\mathbb E})$ be a sublinear expectation space, a random vector $Y \in \mathcal{H}^m$ is said to be independent of another random vector $X \in \mathcal{H}^n$, if 
		\begin{equation*}
			\hat{\mathbb E}[\varphi(X,Y)] = \hat{\mathbb E}[ \hat{\mathbb E}[\varphi(x,Y)] \left|_{x = X}\right. ], \quad \text{for all } \varphi \in C_{l,lip}(\mathbb R^n \times \mathbb R^m). 
		\end{equation*}
	\end{definition}

	\begin{definition}
		Let $(\Omega,\mathcal{H},\hat{\mathbb E})$ be a sublinear expectation space, random vectors $X,Y \in \mathcal{H}^n$ are said to be identically distributed, denoted by $X \overset{d}{=} Y$, if
		\begin{equation*}
			\hat{\mathbb E}[\varphi(X)] = \hat{\mathbb E}[\varphi(Y)], \quad \text{for all } \varphi \in C_{l,lip}(\mathbb R^n). 
		\end{equation*}
	\end{definition}
	
	\begin{remark}
		We say $\tilde X$ is an independent copy of $X$, if $\tilde{X} \overset{d}{=} X$ and $\tilde{X}$ is independent of $X$. Notice that $\tilde{X}$ is independent of $X$ does not imply $X$ is independent of $\tilde X$.
	\end{remark}
	
	\begin{definition}[G-normal distribution]
		Given a sublinear expectation space $(\Omega,\mathcal{H},\hat{\mathbb E})$, random vector $X \in \mathcal{H}^d$ is called G-normally distributed, if 
		\begin{equation*}
			aX + b\tilde{X} \overset{d}{=} \sqrt{a^2+b^2} X, \quad \text{for any } a,b \geq 0, 
		\end{equation*}
		where $\tilde{X}$ is an independent copy of $X$. 
	\end{definition}
	
	Now we can define function 
	\begin{align*}
		G \colon S_d &\to \mathbb R, \\
		A &\mapsto \frac 12\hat{\mathbb E}[\langle AX, X \rangle], 
	\end{align*}
	where $S_d$ denotes the set of all $d \times d$ symmetric matrices. Since $G$ is a monotone, subadditive and bounded operator on $S_d$, there exists a bounded, convex and closed subset $\Gamma \subset S_d^+$, such that 
	\begin{equation} \label{G}
		G(A) = \frac 12\sup_{\gamma \in \Gamma} \rm{tr}(\gamma A). 
	\end{equation}
	Especially, we let $G$ be non-degenerate in this paper, i.e., there exits $\lambda > 0$, such that $G(A) - G(B) \geq \lambda \Vert A-B \Vert_{HS}$, for any $A \geq B$. 
	
	\begin{remark}
		In the case of $d=1$, $G(a) = \frac 12\overline \sigma^2 a^+ - \frac 12 \underline \sigma^2 a^-$, where $\overline \sigma^2 = \hat{\mathbb E}X^2$, $\underline \sigma^2 = -\hat{\mathbb E}[-X^2]$. And $G(a) - G(b) \geq \frac 12\underline{\sigma}^2(a-b)$, for any $a \geq b$. 
	\end{remark}
	
	\begin{definition}[G-expectation]
		Let $\Omega_T = C_0([0,T],\mathbb R^d)$ be the space of $\mathbb R^d$ valued continuous functions on $[0,T]$ with $\omega_0 = 0$, endowing with the supremum norm, and let $B_t(\omega) = \omega_t$ be the canonical process. Set $\mathcal H_T^0 = \{ \varphi(B_{t_1}, \cdots, B_{t_n}) \colon$ $t_1, \cdots, t_n \in [0,T], \varphi \in C_{l,lip}(\mathbb R^{d \times n}), n \geq 1 \}$, for any $X = \varphi(B_{t_1}-B_{t_0}, \cdots, B_{t_n}-B_{t_{n-1}}) \in \mathcal H_T^0$, we define G-expectation as follows, 
		\begin{align*}
			&\tilde{\mathbb E}[\varphi(B_{t_1}-B_{t_0}, B_{t_2}-B_{t_1}, \cdots, B_{t_n}-B_{t_{n-1}})] \\
			=& \hat{\mathbb E}[\varphi(\sqrt{t_1-t_0}\xi_1, \cdots, \sqrt{t_n-t_{n-1}}\xi_n)], 
		\end{align*}
		in which $\xi_1, \cdots, \xi_n$ are identically distributed d-dimensional G-normally distributed random vectors, and $\xi_{i+1}$ is independent of $(\xi_1, \cdots, \xi_i)$, $i = 1,2,\cdots,n-1$. We call $(\Omega_T, \mathcal H_T^0, \tilde{\mathbb E})$ the G-expectation space.  
	\end{definition}

	\begin{definition}[conditional expectation]
		Under the same notations above, for $X = \varphi(B_{t_1}-B_{t_0}, B_{t_2}-B_{t_1}, \cdots, B_{t_n}-B_{t_{n-1}})$, we can define its conditional expectation as follows, 
		\begin{align*}
			&\tilde{\mathbb E}_{t_i}[\varphi(B_{t_1}-B_{t_0}, B_{t_2}-B_{t_1}, \cdots, B_{t_n}-B_{t_{n-1}})] \\
			=& \tilde \varphi(B_{t_1}-B_{t_0}, B_{t_2}-B_{t_1}, \cdots, B_{t_i}-B_{t_{i-1}}), 
		\end{align*}
		where 
		\begin{equation*}
			\tilde \varphi(x_1,\cdots,x_i) = \tilde{\mathbb E}[\varphi(x_1,\cdots,x_i, B_{t_{i+1}}-B_{t_i}, \cdots, B_{t_n}-B_{t_{n-1}})]. 
		\end{equation*}
	\end{definition}
	
	\begin{remark}
		Without loss of generality, we write $\tilde{\mathbb E}$ as $\hat{\mathbb E}$ in the rest of the paper, and the canonical process $\{B_t\}_{t \in [0,T]}$ is the so-called G-Brownian motion. We refer readers to \cite{nonlinearexp} for more details of this part. 
	\end{remark}
	
	\subsection{Stochastic calculus in G-expectation}
	
	Similar with the classical stochastic analysis, here we can define integral of simple processes first, which have forms $\eta = \sum\limits_{i=0}^{n-1} \xi_i \mathbbm{1}_{[t_i,t_{i+1})}$, with respect to G-Brownian motion, then we extend it to some larger spaces. For readers' convenience, we list the main notations and spaces appearing in this paper as follows. 
	
	\begin{itemize}
		\item The scalar product and the norm of the Euclid space $\mathbb R^n$ are denoted by $\langle \cdot,\cdot \rangle$ and $|\cdot|$ respectively; 
		\item $L_{ip}(\Omega_T) = \{ \varphi(B_{t_1},\cdots,B_{t_n}) \colon t_i \in [0,T], \varphi \in C_{b,lip}({\mathbb R^{d \times n}}), n \geq 1 \}$; 
		\item $L_G^p(\Omega_T)$ is the completion of $L_{ip}(\Omega_T)$ under $\Vert \xi \Vert_{p,G} = (\hat{\mathbb E}|\xi|^p)^{1/p}$; 
		\item $M_G^0(0,T) = \{ \sum\limits_{i=0}^{n-1} \xi_i \mathbbm{1}_{[t_i,t_{i+1})} \colon 0 = t_0 < \cdots < t_n = T, \xi_i \in L_{ip}(\Omega_{t_i}) \}$; 
		\item $M_G^p(0,T)$ is the completion of $M_G^0(0,T)$ under $\Vert \eta \Vert_{M_G^p} = (\hat{\mathbb E} \int_0^T |\eta_t|^p dt)^{1/p}$; 
		\item $H_G^p(0,T)$ is the completion of $M_G^0(0,T)$ under $\Vert \eta \Vert_{H_G^p} = (\hat{\mathbb E} [\int_0^T |\eta_t|^2 dt]^{p/2})^{1/p}$; 
		\item $\mathbb L^p(\Omega_T) = \{ X \in \mathcal B(\Omega_T) \colon \hat{\mathbb E}|X|^p < \infty \}$; 
		\item $\mathbb M^{p,0}(0,T) = \{ \sum\limits_{i=0}^{n-1} \xi_i \mathbbm 1_{[t_i,t_{i+1})} \colon 0 = t_0 < \cdots < t_n = T, \xi_i \in \mathbb L^p(\Omega_{t_i}) \}$; 
		\item $\mathbb M^p(0,T)$ is the completion of $\mathbb M^{p,0}(0,T)$ under $\Vert \eta \Vert_{\mathbb M^p} = (\hat{\mathbb E}\int_0^T |\eta_t|^p dt)^{1/p}$; 
		\item $\mathbb H^p(0,T)$ is the completion of $\mathbb M^{p,0}(0,T)$ under $\Vert \eta \Vert_{\mathbb H^p} = (\hat{\mathbb E}[\int_0^T |\eta_t|^2 dt]^{p/2})^{1/p}$; 
		\item $S_G^0(0,T) = \{ \varphi(t,B_{t_1 \wedge t}, \cdots, B_{t_n \wedge t}) \colon t_1,\cdots,t_n \in [0,T], \varphi \in C_{b,lip}(\mathbb R^{n+1}) \}$; 
		\item $S_G^p(0,T)$ is the completion of $S_G^0(0,T)$ under $\Vert \eta \Vert_{S_G^p} = (\hat{\mathbb E}\sup\limits_{t \in [0,T]}|\eta_t|^p)^{1/p}$; 
		\item $\tilde S_G^p(0,T)$ is the completion of $S_G^0(0,T)$ under $\Vert \eta \Vert_{\tilde S_G^p} = (\sup\limits_{t \in [0,T]}\hat{\mathbb E}|\eta_t|^p)^{1/p}$. 
	\end{itemize}
	
	Similarly, in the G-expectation framework, we can define martingales and obtain some related properties, such as BDG inequality and Doob's maximal inequality, which play key roles in this paper. We first introduce the martingale property of stochastic integral with respect to G-Brownian motion and the BDG inequality, which can be found in \cite{nonlinearexp} and \cite{properties} for details. 
	
	\begin{prop} \label{bdg}
		For any $\eta \in \mathbb M^2(0,T)$, $p \geq 2$, there exist two constants $0 < c_p < C_P < \infty$ only depending on $p$, such that 
		\begin{align*}
			&\hat{\mathbb E} \int_0^T \eta_t dB_t = 0, \\
			\underline\sigma^p c_p \hat{\mathbb E}(\int_0^T |\eta_t|^2dt)^{p/2} \leq &\hat{\mathbb E}\sup\limits_{t \in [0,T]} |\int_0^t \eta_t dB_s|^p \leq \overline \sigma^p C_p \hat{\mathbb E}(\int_0^T |\eta_t|^2dt)^{p/2}, \\
			\hat{\mathbb E}\sup\limits_{t \in [0,T]} |\int_0^t \eta_t &d\langle B \rangle_s|^p \leq \overline \sigma^{2p} T^{p-1} \hat{\mathbb E}\int_0^T |\eta_t|^pdt. 
		\end{align*}
	\end{prop}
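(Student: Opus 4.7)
The plan is to establish the three claims in turn, first for simple integrands and then extending by density.

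For the zero-mean property, I take $\eta=\sum_{i=0}^{n-1}\xi_i\mathbbm{1}_{[t_i,t_{i+1})}\in\mathbb{M}^{2,0}(0,T)$, so that $\int_0^T\eta_tdB_t=\sum_i\xi_i(B_{t_{i+1}}-B_{t_i})$. Using the tower property of the conditional G-expectation, splitting $\xi_i=\xi_i^+-\xi_i^-$ so that positive homogeneity factors $\xi_i^\pm$ out of $\hat{\mathbb{E}}_{t_i}$, and observing that $\hat{\mathbb{E}}_{t_i}[\pm(B_{t_{i+1}}-B_{t_i})]=0$ by symmetry of the G-normal increments, one obtains $\hat{\mathbb{E}}\int_0^T\eta_tdB_t=0$ for every simple $\eta$. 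The extension to general $\eta\in\mathbb{M}^2(0,T)$ then follows from the subadditivity estimate $|\hat{\mathbb{E}}X-\hat{\mathbb{E}}Y|\leq\hat{\mathbb{E}}|X-Y|$ together with the $L^2$-isometry $\hat{\mathbb{E}}|\int_0^T\eta_tdB_t|^2\leq\overline\sigma^2\|\eta\|_{\mathbb{M}^2}^2$, which comes out of the upper BDG bound below.

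For the BDG inequalities, I would use the representation $\hat{\mathbb{E}}[\cdot]=\sup_{\theta\in\Theta}\mathbb{E}_\theta[\cdot]$. Under each $\mathbb{E}_\theta$ the canonical process $(B_t)$ is a classical continuous martingale whose quadratic variation is absolutely continuous in time with density $\mathbb{E}_\theta$-a.s.\ in $[\underline\sigma^2,\overline\sigma^2]$. The classical BDG under each $\theta$ then gives
\begin{equation*}
\underline\sigma^p c_p\,\mathbb{E}_\theta\Bigl(\int_0^T|\eta_t|^2dt\Bigr)^{p/2}\leq\mathbb{E}_\theta\sup_{t\in[0,T]}\Bigl|\int_0^t\eta_sdB_s\Bigr|^p\leq\overline\sigma^p C_p\,\mathbb{E}_\theta\Bigl(\int_0^T|\eta_t|^2dt\Bigr)^{p/2}.
\end{equation*}
Taking $\sup_\theta$ on all three terms yields the desired double inequality, since $\int_0^T|\eta_t|^2dt$ is a pathwise quantity and hence $\sup_\theta\mathbb{E}_\theta(\cdot)^{p/2}=\hat{\mathbb{E}}(\cdot)^{p/2}$ on the outer terms, while the middle supremum transforms into $\hat{\mathbb{E}}\sup_t|\int_0^t\eta_sdB_s|^p$. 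Density of simple processes then extends the inequality from $\mathbb{M}^{2,0}(0,T)$ to $\mathbb{M}^2(0,T)$.

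The third inequality is a pathwise estimate: quasi-surely $d\langle B\rangle_t\leq\overline\sigma^2\,dt$, hence $|\int_0^t\eta_sd\langle B\rangle_s|\leq\overline\sigma^2\int_0^t|\eta_s|ds$, and H\"older's inequality on $[0,T]$ gives $(\int_0^T|\eta_s|ds)^p\leq T^{p-1}\int_0^T|\eta_s|^pds$; taking $\sup_{t\in[0,T]}$ and applying $\hat{\mathbb{E}}$ finishes the proof.

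The main obstacle I expect is the lower BDG bound. The representation $\hat{\mathbb{E}}=\sup_\theta\mathbb{E}_\theta$ cooperates smoothly with upper bounds, but for lower bounds one must verify that $\sup_\theta$ really converts the classical pointwise lower estimate into the correct $\hat{\mathbb{E}}$ on both sides; this works here precisely because the controlling quantity $\int_0^T|\eta_s|^2ds$ carries no $\theta$-dependence, unlike the quadratic variation $\int_0^T|\eta_s|^2d\langle B\rangle_s$ one first obtains and must bound below. A secondary technicality is that simple processes dense in $\mathbb{M}^2(0,T)$ are not a priori dense in $\mathbb{H}^p(0,T)$ for $p>2$, so the extension has to be done in the correct ambient space, using the very norm equivalence that BDG produces in a short bootstrap argument.
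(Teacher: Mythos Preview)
The paper does not prove this proposition; it is stated as a preliminary result with a reference to \cite{nonlinearexp} and \cite{properties} and no argument is given. Your sketch is essentially the standard proof found in those references: the zero-mean property via the tower rule and positive homogeneity on simple integrands, the BDG bounds via the representation $\hat{\mathbb E}=\sup_{\theta\in\Theta}\mathbb E_\theta$ combined with classical BDG under each $\theta$ and the quasi-sure bounds $\underline\sigma^2\,dt\le d\langle B\rangle_t\le\overline\sigma^2\,dt$, and the last inequality by a pathwise estimate plus H\"older. Your diagnosis of the lower-bound subtlety is accurate and your resolution is the right one: since $\int_0^T|\eta_s|^2\,ds$ is $\theta$-independent, taking $\sup_\theta$ on both sides of the classical lower BDG bound is legitimate. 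The density issue you flag at the end is real but harmless here; the inequalities are typically first established on $M_G^0(0,T)$ and then carried to the completions by the very continuity they provide, exactly as you indicate.
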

	
	The following Doob's type estimate is from Song (\cite{properties}), we also refer to the recently published book \cite{nonlinearexp} on nonlinear expectation by Shige Peng. 
	
	\begin{prop} \label{doob}
		For any $\alpha \geq 1$ and $\delta > 0$, let $1 < \gamma < \beta \coloneqq (\alpha + \delta)/\alpha$, $\gamma \leq 2$, there exists a positive constant $C_{\beta,\gamma}$,  only depending on $\beta$ and $\gamma$, such that for any $\xi \in L_{ip}(\Omega_T)$, the following inequality holds, 
		\begin{equation*}
			\hat{\mathbb E}\sup_{t \in [0,T]} \hat{\mathbb E}_t |\xi|^\alpha \leq C_{\beta,\gamma} [ (\hat{\mathbb E}|\xi|^{\alpha+\delta})^{1/\gamma\beta} + (\hat{\mathbb E}|\xi|^{\alpha+\delta})^{1/\gamma} ], 
		\end{equation*}
	\end{prop}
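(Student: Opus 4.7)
The plan is to dominate $\sup_t \hat{\mathbb{E}}_t|\xi|^\alpha$ pathwise by a power of the nonnegative G-martingale $N_t := \hat{\mathbb{E}}_t|\xi|^{\alpha+\delta}$ via Jensen's inequality, then exploit the fact that $N$ is a $P$-supermartingale for every $P$ in the dominating family $\mathcal{P}$ to invoke Doob's weak maximal inequality, and finally assemble the $L^1$ bound by a layer-cake plus Hölder interpolation.

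Since $\beta = (\alpha+\delta)/\alpha > 1$ and $x \mapsto x^\beta$ is nondecreasing and convex on $[0,\infty)$, the conditional sublinear Jensen inequality gives $(\hat{\mathbb{E}}_t|\xi|^\alpha)^\beta \leq \hat{\mathbb{E}}_t|\xi|^{\alpha\beta} = N_t$, hence
\begin{equation*}
\sup_{t \in [0,T]} \hat{\mathbb{E}}_t|\xi|^\alpha \leq \bigl(\sup_{t \in [0,T]} N_t\bigr)^{1/\beta}.
\end{equation*}
Using the representation $\hat{\mathbb{E}} = \sup_{P \in \mathcal{P}} E_P$, for each $P \in \mathcal{P}$ the process $N$ is a nonnegative $P$-supermartingale with deterministic initial value $N_0 = A := \hat{\mathbb{E}}|\xi|^{\alpha+\delta}$, so Doob's weak maximal inequality $P(\sup_t N_t > \mu) \leq A/\mu$ holds for each $P$ and, after taking the supremum in $P$, so does the capacity estimate $\hat{\mathbb{E}}[\mathbbm{1}_{\{\sup_t N_t > \mu\}}] \leq A/\mu$ for every $\mu > 0$.

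To turn this weak bound into the desired $L^1$ estimate, apply the sublinear Hölder inequality with exponent $\gamma \in (1, \beta]$, $\gamma \leq 2$, together with the layer-cake bound $\hat{\mathbb{E}}[Y] \leq \int_0^\infty \hat{\mathbb{E}}[\mathbbm{1}_{\{Y > \mu\}}]\,d\mu$ for $Y \geq 0$:
\begin{equation*}
\hat{\mathbb{E}}\bigl[\sup_t \hat{\mathbb{E}}_t|\xi|^\alpha\bigr] \leq \bigl(\hat{\mathbb{E}}\bigl[(\sup_t N_t)^{\gamma/\beta}\bigr]\bigr)^{1/\gamma} \leq \Bigl(\int_0^\infty \min\bigl(1, A\mu^{-\beta/\gamma}\bigr)\,d\mu\Bigr)^{1/\gamma} = \Bigl(\tfrac{\beta}{\beta-\gamma}\Bigr)^{1/\gamma} A^{1/\beta}.
\end{equation*}
Since $A^{1/\beta} \leq A^{1/(\gamma\beta)} + A^{1/\gamma}$ (the first term dominates when $A \leq 1$, the second when $A \geq 1$), this produces the claimed estimate with a constant depending only on $\beta$ and $\gamma$.

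The main obstacle is that $N$ is only a $P$-supermartingale (not a $P$-martingale) for each $P$ in the dominating family, which precludes the classical $L^\beta$ Doob maximal inequality and forces us to build an $L^1$ bound from the weak $(1,1)$ tail inequality. The two-term right-hand side in the statement is the characteristic signature of this interpolation: it reflects the split of the layer-cake integral into the trivial $L^\infty$ regime ($\min = 1$) and the tail regime ($\min = A\mu^{-\beta/\gamma}$), one of which dominates for small $A$ and the other for large $A$.
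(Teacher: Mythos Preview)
The paper does not prove this proposition; it is quoted from Song \cite{properties} (with a pointer to Peng \cite{nonlinearexp}). Your argument is essentially the approach of that reference: dominate $\hat{\mathbb E}_t|\xi|^\alpha$ by $N_t^{1/\beta}$ with $N_t=\hat{\mathbb E}_t|\xi|^{\alpha+\delta}$ via conditional H\"older, observe that the G-martingale $N$ is a nonnegative $P$-supermartingale for every $P$ in the representing family with deterministic $N_0=A=\hat{\mathbb E}|\xi|^{\alpha+\delta}$, apply the weak-type Doob inequality $P(\sup_t N_t>\mu)\leq A/\mu$ uniformly in $P$, and convert this to an $L^1$ bound by layer-cake. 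Your computation is correct and in fact yields the one-term estimate $\hat{\mathbb E}\bigl[\sup_t\hat{\mathbb E}_t|\xi|^\alpha\bigr]\leq\bigl(\tfrac{\beta}{\beta-\gamma}\bigr)^{1/\gamma}A^{1/\beta}$, from which the two-term form in the statement follows trivially (and which the paper itself recovers a posteriori in Remark~\ref{redoob}).

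Two technical points should be made explicit. First, the $P$-supermartingale property of $N$ rests on the inequality $E_P[\,\cdot\mid\mathcal F_t]\leq \hat{\mathbb E}_t[\,\cdot\,]$ $P$-a.s., which is a consequence of the conditional representation $\hat{\mathbb E}_t[\eta]=\operatorname*{ess\,sup}_{Q\in\mathcal P(t,P)}E_Q[\eta\mid\mathcal F_t]$; this is a nontrivial structural fact in the G-framework and should be cited rather than asserted. Second, $\sup_t N_t$ and the indicator $\mathbbm 1_{\{\sup_t N_t>\mu\}}$ do not belong to $L_G^p$, so the symbol $\hat{\mathbb E}$ in those places really denotes the upper expectation $\sup_{P\in\mathcal P}E_P$ (equivalently, the Choquet capacity); this is consistent with your use of the representation but worth saying. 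Finally, note that your argument never uses the hypothesis $\gamma\leq 2$; that restriction is not needed for the inequality itself.
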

	
	\begin{remark} \label{redoob}
		Setting $C_\beta = 2 \inf \{ C_{\beta,\gamma} \colon 1 < \gamma < \beta, \gamma \leq 2 \}$, by Proposition \ref{doob}, and noticing $\frac{1}{\beta^2} < \frac{1}{\gamma\beta} < \frac{1}{\gamma} < 1$, we have 
		\begin{equation*}
			\hat{\mathbb E}\sup_{t \in [0,T]} \hat{\mathbb E}_t |\xi|^\alpha \leq C_{\beta} [ (\hat{\mathbb E}|\xi|^{\alpha+\delta})^{1/\beta^2} + \hat{\mathbb E}|\xi|^{\alpha+\delta} ]. 
		\end{equation*}
		Then for all $\lambda > 0$, 
		\begin{equation*}
			\hat{\mathbb E}\sup_{t \in [0,T]} \hat{\mathbb E}_t |\lambda\xi|^\alpha \leq C_{\beta} [ (\hat{\mathbb E}|\lambda\xi|^{\alpha+\delta})^{1/\beta^2} + \hat{\mathbb E}|\lambda\xi|^{\alpha+\delta} ], 
		\end{equation*}
		denoting $a = \hat{\mathbb E}\sup_{t \in [0,T]} \hat{\mathbb E}_t |\xi|^\alpha$ and $b = \hat{\mathbb E}|\xi|^{\alpha+\delta}$, we obtain 
		\begin{equation*}
			a\lambda^\alpha \leq C_\beta \big[ b^{1/\beta^2} \lambda^{(\alpha+\delta)/\beta^2} + b \lambda^{\alpha+\delta} \big], 
		\end{equation*}
		or equivalently, 
		\begin{equation*}
			a \leq C_\beta \big[ b^{1/\beta^2}\lambda^{-\delta/\beta} + b \lambda^{\delta} \big]. 
		\end{equation*}
		Thus, 
		\begin{equation*}
			a \leq \inf_{\lambda > 0} C_\beta \big[ b^{1/\beta^2}\lambda^{-\delta/\beta} + b \lambda^{\delta} \big] = C b^{1/\beta}, 
		\end{equation*}
		where $\lambda^* = b^{-1/(\alpha+\delta)} \beta^{-\beta/(1+\beta)\delta}$, which means 
		\begin{equation*}
			\hat{\mathbb E}\sup_{t \in [0,T]} \hat{\mathbb E}_t |\xi|^\alpha \leq C(\hat{\mathbb E}|\xi|^{\alpha+\delta})^{\alpha/(\alpha+\delta)}, 
		\end{equation*}
		where $C = \big( \beta^{1/(1+\beta)} + \beta^{-\beta/(1+\beta)} \big)C_\beta$. 
	\end{remark}
	
	The following lemma can be found in \cite{bsde} (Lemma 3.4), which seems simple but crucial. 
	
	\begin{lemma} \label{g-martin}
		Let $X \in S_G^2(0,T)$, $K^1, K^2$ are two non-increasing G-martingales with $K_0^1 = K_0^2 = 0$ and $K_T^1, K_T^2 \in L_G^2(\Omega_T)$, then 
		\begin{equation*}
			\int_0^t X_s^+ dK_s^1 + \int_0^t X_s^- dK_s^2 
		\end{equation*}
		is still a non-increasing G-martingale. 
	\end{lemma}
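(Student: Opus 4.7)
The plan is to establish the conclusion for piecewise-constant approximations $X^n$ of $X$ and then pass to the limit, making crucial use of the pointwise disjointness $X_t^+ \cdot X_t^- = 0$ at the discrete level.

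Pick a partition $0 = t_0^n < t_1^n < \cdots < t_n^n = T$ and set $X^n_t := \sum_{i} X_{t_i^n}\mathbbm 1_{[t_i^n, t_{i+1}^n)}(t)$. Define
\begin{equation*}
L^n_t := \int_0^t (X^n_s)^+\,dK_s^1 + \int_0^t (X^n_s)^-\,dK_s^2 = \sum_{i} \Bigl( X_{t_i^n}^+ \Delta_i K^1 + X_{t_i^n}^- \Delta_i K^2 \Bigr),
\end{equation*}
with $\Delta_i K^j := K^j_{t_{i+1}^n \wedge t} - K^j_{t_i^n \wedge t}$. Each summand is non-positive (non-negative coefficient times a non-positive increment of a non-increasing process), so $L^n$ is non-increasing. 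For the G-martingale property, the key observation is that $X_{t_i^n}$ is a real-valued random variable, so precisely one of $X_{t_i^n}^+, X_{t_i^n}^-$ vanishes; the $i$-th summand therefore reduces to $|X_{t_i^n}|\,\Delta_i K^{j_i}$ for an $\mathcal H_{t_i^n}$-measurable non-negative scalar $|X_{t_i^n}|$ and an $\mathcal H_{t_i^n}$-measurable index $j_i \in \{1,2\}$. The positive-scalar pull-out of $\hat{\mathbb E}_{t_i^n}$ combined with $\hat{\mathbb E}_{t_i^n}[K^{j_i}_{t_{i+1}^n} - K^{j_i}_{t_i^n}] = 0$ annihilates this increment under the conditional G-expectation, and a backward induction across the partition yields $\hat{\mathbb E}_s[L^n_t] = L^n_s$ for every $s \leq t$.

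To pass to the limit I would let the mesh tend to zero. The pathwise bound
\begin{equation*}
\sup_{t \in [0,T]}\bigl|L^n_t - L_t\bigr| \leq \sup_{s \in [0,T]}|X^n_s - X_s|\cdot\bigl(|K_T^1| + |K_T^2|\bigr),
\end{equation*}
together with the Cauchy--Schwarz inequality and the assumption $K_T^i \in L_G^2(\Omega_T)$, reduces the convergence of $L^n$ in $\mathbb L^1$ to the convergence $X^n \to X$ in $S_G^2$; the continuity of $\hat{\mathbb E}_s$ on $\mathbb L^1$ then transfers the identity $\hat{\mathbb E}_s[L^n_t] = L^n_s$ to the limit, and the monotonicity of $L$ is inherited trivially.

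The main obstacle is securing $X^n \to X$ in $S_G^2$ by time-discretization. For $X \in S_G^0$ this is immediate from the uniform continuity in $t$ of smooth cylinder functions, so the proof goes through directly under that extra regularity; the general case $X \in S_G^2$ requires a density argument, first approximating $X$ by elements of $S_G^0$ in the $S_G^2$-norm and then discretizing each approximant in time by a diagonal extraction. The absence of a dominated convergence theorem in the G-framework is what makes this density step delicate, but the higher-moment Doob-type estimate of Remark~\ref{redoob} supplies the uniform control needed to close the argument.
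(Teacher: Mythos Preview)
The paper does not give its own proof of this lemma; it merely cites \cite{bsde}, Lemma~3.4. So there is no argument in the paper to compare against, and your proposal stands on its own merits.

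Your overall strategy --- discretize $X$, verify the G-martingale property for the step-process integrals, pass to the limit via an $S_G^2$ bound --- is the natural one and is essentially how the cited result is proved. Two points deserve tightening.

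First, the ``random index'' step is not quite a proof in the sublinear setting. Writing the $i$-th summand as $|X_{t_i^n}|\,\Delta_i K^{j_i}$ and asserting $\hat{\mathbb E}_{t_i^n}[\Delta_i K^{j_i}]=0$ hides the real issue: the sum of two G-martingales is in general only a G-supermartingale, so the martingale property of $K^1$ and $K^2$ separately does not automatically transfer to an $\mathcal F_{t_i^n}$-measurable mixture. The clean way to use the orthogonality $X_{t_i^n}^+ X_{t_i^n}^- = 0$ is to multiply through: with $\xi = X_{t_i^n}^+\Delta_i K^1 + X_{t_i^n}^-\Delta_i K^2$, positive homogeneity gives
\[
X_{t_i^n}^+\,\hat{\mathbb E}_{t_i^n}[\xi] = \hat{\mathbb E}_{t_i^n}\bigl[(X_{t_i^n}^+)^2\Delta_i K^1\bigr] = (X_{t_i^n}^+)^2\,\hat{\mathbb E}_{t_i^n}[\Delta_i K^1] = 0,
\]
and symmetrically $X_{t_i^n}^-\,\hat{\mathbb E}_{t_i^n}[\xi]=0$; combined with the trivial bound $|\hat{\mathbb E}_{t_i^n}[\xi]|\le |X_{t_i^n}|\,\hat{\mathbb E}_{t_i^n}[|\Delta_i K^1|+|\Delta_i K^2|]$ this forces $\hat{\mathbb E}_{t_i^n}[\xi]=0$ q.s., including on $\{X_{t_i^n}=0\}$ (where, incidentally, \emph{both} of $X_{t_i^n}^\pm$ vanish, not ``precisely one'').

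Second, the appeal to Remark~\ref{redoob} in the closing paragraph is misplaced. The approximation $X^n\to X$ in $S_G^2$ needs no Doob-type inequality: $S_G^2(0,T)$ is \emph{defined} as the completion of $S_G^0(0,T)$, and for any cylinder process $\varphi(t,B_{t_1\wedge t},\dots,B_{t_n\wedge t})\in S_G^0$ the time-discretization converges uniformly in $(t,\omega)$ by uniform continuity of $\varphi$. A diagonal extraction then handles general $X\in S_G^2$. The absence of dominated convergence is not an obstacle here.
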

	
	\section{Existence and uniqueness in small time duration} \label{eusmall}
	
	\subsection{Existence and uniqueness}
	
	\begin{definition}
		We call the quadruple $(X_t, Y_t, Z_t, K_t)_{0 \leq t \leq T}$ a solution to the FBGSDE (\ref{eq}), if 
		\begin{enumerate}[(i)]
			\item $X, Y \in S_G^2(0,T)$, $Z \in M_G^2(0,T)$, and $K_t$ is a non-increasing G-martingale with $K_0 = 0$, $K_T \in L_G^2(\Omega_T)$; 
			\item $(X_t, Y_t, Z_t, K_t)_{0 \leq t \leq T}$ satisfies equation (\ref{eq}). 
		\end{enumerate}
	\end{definition}
	
	\begin{hy} \label{h1}
		We say real-valued functions $b, h, \sigma, f, g, \Phi$ satisfy Assumption \ref{h1}, if there exist two constants $L > 0$ and $\beta > 2$, such that 
		\begin{enumerate}[(i)]
			\item for every $t \in [0,T]$, and for every $(x,y,z), (x',y',z') \in \mathbb R^3$, 
			\begin{align*}
				|\varphi_1(t,x,y) - \varphi_1(t,x',y')| &\leq L (|x-x'| + |y-y'|), \\
				|\varphi_2(t,x,y,z) - \varphi_2(t,x',y',z')| &\leq L (|x-x'| + |y-y'| + |z-z'|), \\
				|\Phi(x) - \Phi(x')| &\leq L |x-x'|, 
			\end{align*} 
			where $\varphi_1 = (b, h, \sigma)^{\mathrm{T}}$, $\varphi_2 = (f, g)^{\mathrm{T}}$. 
			\item for every $t \in [0,T]$ and for every $(x,y,z) \in \mathbb R^3$, 
			\begin{align*}
				|\varphi_1(t,x,y)| &\leq L(1 + |y|), \\
				|\varphi_2(t,x,y,z) | &\leq L(1 + |y| + |z|). 
			\end{align*}
			\item For every $(x,y,z) \in \mathbb R^3$, we have $\varphi_1(\cdot,x,y), \varphi_2(\cdot,x,y,z) \in M_G^\beta(0,T)$, and $\Phi(x) \in L_G^\beta(\Omega_T)$. 
		\end{enumerate}
	\end{hy}
	
	\begin{theorem} \label{thm1}
		Assume that Assumption \ref{h1} holds, then there exists a constant $\delta = \delta(L)$, such that whenever $T \leq \delta$, FBGSDE (\ref{eq}) has a unique solution. 
	\end{theorem}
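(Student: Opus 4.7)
The plan is a Banach fixed-point argument that exploits the partial decoupling of the system: the forward coefficients depend on $(X, Y)$ but not on $Z$. Given any candidate $y \in S_G^\beta(0, T)$, I would first solve the forward G-SDE
\[
dX_t = b(t, X_t, y_t)\,dt + h(t, X_t, y_t)\,d\langle B\rangle_t + \sigma(t, X_t, y_t)\,dB_t, \quad X_0 = x,
\]
for a unique $X \in S_G^\beta(0, T)$ via the existing G-SDE well-posedness of Peng and Bai--Lin, and then solve the G-BSDE with data $(f, g, \Phi(X_T))$ for a unique $(Y, Z, K)$ via Hu--Ji--Peng--Song. This defines a map $\Psi : y \mapsto Y$ from $S_G^\beta(0, T)$ to itself, whose fixed points are exactly the $Y$-component of solutions to (\ref{eq}).

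To close the argument it suffices to show $\Psi$ is a strict contraction when $T$ is small. Taking $y^1, y^2$, denoting the corresponding outputs by $(X^i, Y^i, Z^i, K^i)$ and writing $\hat X = X^1 - X^2$, $\hat Y = Y^1 - Y^2$, the G-It\^o formula applied to $|\hat X_t|^\beta$, combined with the Lipschitz assumption, the BDG inequality of Proposition \ref{bdg} and a Gronwall step, gives a forward estimate of the form
\[
\hat{\mathbb{E}}\bigl[\sup_{t \leq T} |\hat X_t|^\beta\bigr] \leq C\,T^{\beta/2}\, \hat{\mathbb{E}}\bigl[\sup_{t \leq T} |\hat y_t|^\beta\bigr].
\]
For the backward part, It\^o's formula on $|\hat Y_t|^2$ produces the dangerous cross term $\int_0^T \hat Y_s\,d(K^1_s - K^2_s)$: because $K^1 - K^2$ is not a G-martingale, it cannot simply be dropped under $\hat{\mathbb{E}}$. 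The idea is to split $\hat Y = \hat Y^+ - \hat Y^-$ and invoke Lemma \ref{g-martin}, rewriting this term as a non-increasing G-martingale (hence nonpositive under $\hat{\mathbb{E}}$) plus a sign-controlled remainder. Combining this with the Lipschitz bounds on $f, g, \Phi$, BDG, and the forward estimate, one arrives at
\[
\hat{\mathbb{E}}\bigl[\sup_{t \leq T} |\hat Y_t|^2\bigr] \leq C'\,T\, \hat{\mathbb{E}}\bigl[\sup_{t \leq T} |\hat y_t|^2\bigr],
\]
so choosing $\delta := 1/(2C')$ makes $\Psi$ contractive on the complete space $S_G^\beta(0, T)$ for all $T \leq \delta$, and Banach's fixed-point theorem produces the unique solution.

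The principal obstacle is precisely the handling of $\int \hat Y\,d(K^1 - K^2)$: in the classical case the difference of two martingales is again a zero-mean martingale and the term disappears, whereas in the G-framework one only knows that each $K^i$ individually has nonpositive sublinear expectation, and the asymmetry of $\hat{\mathbb{E}}$ destroys this after subtraction. Lemma \ref{g-martin} is the key tool that sidesteps this issue. A secondary, but pervasive, technical point is the absence of a dominated convergence theorem in the G-setting, which forces the use of the Doob-type inequality of Remark \ref{redoob}; this requires strictly supercritical integrability and is the reason why Assumption \ref{h1}(iii) demands $\beta > 2$ rather than the critical index $\beta = 2$.
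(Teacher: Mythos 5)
Your overall strategy coincides with the paper's: decouple the system through the map $y \mapsto X \mapsto (Y,Z,K) \mapsto Y$ using the known well-posedness of G-SDEs and G-BSDEs, show this map is a contraction for small $T$, and handle the troublesome term $\int \hat Y_s\, d(K_s-K'_s)$ by splitting $\hat Y=\hat Y^+-\hat Y^-$ and invoking Lemma \ref{g-martin}. You have correctly identified both the architecture of the argument and the key lemma.

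There is, however, a genuine gap in the norm you contract in. You claim $\hat{\mathbb E}\bigl[\sup_{t\le T}|\hat Y_t|^2\bigr]\le C'T\,\hat{\mathbb E}\bigl[\sup_{t\le T}|\hat y_t|^2\bigr]$, but the backward It\^o computation only yields, after moving the G-martingale part to the left and applying $\hat{\mathbb E}_t$, a pointwise bound of the form $|\hat Y_t|^2\le\hat{\mathbb E}_t[\Xi_t]$. To convert this into a bound on $\hat{\mathbb E}\sup_t|\hat Y_t|^2$ you must control $\hat{\mathbb E}\sup_t\hat{\mathbb E}_t[\,\cdot\,]$, which in the G-framework requires the Doob-type inequality of Proposition \ref{doob}/Remark \ref{redoob}; that inequality costs strictly higher integrability and returns only a fractional power $\alpha/(\alpha+\delta)<1$ of a higher moment of the data. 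The resulting estimate bounds the $2$-norm of $\hat Y$ by a fractional power of a $(2+\delta)$-norm of $\hat y$, which is not a contraction on any single normed space; moreover, $S_G^\beta(0,T)$ equipped with the $S_G^2$-type norm in which your final estimate is stated is not complete, so Banach's theorem does not apply as written. The paper sidesteps all of this by running the fixed point in $\tilde S_G^2(0,T)$ with the weaker norm $\sup_{t}\hat{\mathbb E}|\cdot|^2$: taking $\hat{\mathbb E}_t$ and then $\hat{\mathbb E}$ pointwise in $t$ gives $\hat{\mathbb E}|\hat Y_t|^2\le C_T\sup_{s}\hat{\mathbb E}|\hat y_s|^2$ for each fixed $t$ with no Doob inequality at all, and the supremum over $t$ is taken outside the expectation at the very end; only afterwards is the fixed point upgraded to $S_G^2(0,T)$. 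The Doob-type inequality and the hypothesis $\beta>2$ are reserved for the stability estimate of Proposition \ref{estimate}, not for Theorem \ref{thm1}. Replacing your target norm by $\sup_{t}\hat{\mathbb E}|\cdot|^2$ (and dropping the unnecessary $\beta$-th power estimate on $\hat X$, for which power $2$ suffices) repairs the argument and reduces it to the paper's proof.
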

	
	\begin{proof}
		We first construct a map $\mathcal I \colon \tilde{S}_G^2(0,T) \to \tilde{S}_G^2(0,T)$ as follows, for any $(y_t)_{t \in [0,T]} \in \tilde{S}_G^2(0,T)$, let $(X_t)_{t \in [0,T]}$ be the solution of GSDE 
		\begin{equation} \label{fsde}
			X_t = x + \int_0^t b(s,X_s,y_s) ds + \int_0^t h(s,X_s,y_s) d\langle B \rangle_s + \int_0^t \sigma(s,X_s,y_s) dB_s, 
		\end{equation}
		and $(Y_t,Z_t,K_t)_{t \in [0,T]}$ be the solution of BGSDE 
		\begin{align} \label{bsde}
			Y_t = \Phi(X_T) &+ \int_t^T f(s,X_s,Y_s,Z_s) ds + \int_t^T g(s,X_s,Y_s,Z_s) d\langle B \rangle_s \notag\\
			&- \int_t^T Z_s dB_s - \int_t^T dK_s, 
		\end{align}
		then we define $\mathcal I(y) = Y$. Since the coefficients all satisfy Lipschitz conditions, the two equations above have unique solutions for any $T$ (see also \cite{nonlinearexp} and \cite{bsde}), the map $\mathcal{I}$ we constructed is well-defined. Next we show that, when $T$ is small enough, $\mathcal{I}$ is a contraction. \par
		For any $y, y' \in \tilde{S}_G^2(0,T)$, let $(X_t,Y_t,Z_t,K_t)_{t \in [0,T]}$ and $(X'_t,Y'_t,Z'_t,K'_t)_{t \in [0,T]}$ be the solutions of the corresponding equations. Set $\hat X_t = X_t - X'_t$, $\hat Y_t = Y_t - Y'_t$, $\hat y_t = y_t -y'_t$, noticing that 
		\begin{align*}
			\hat X_t =& \int_0^t b(s,X_s,y_s) - b(s,X'_s,y'_s) ds + \int_0^t h(s,X_s,y_s) - h(s,X'_s,y'_s) d\langle B \rangle_s \\
			&+ \int_0^t \sigma(s,X_s,y_s) - \sigma(s,X'_s,y'_s) dB_s, 
		\end{align*}
		by the Lipschitz conditions of $b$, $h$ and $\sigma$, and Proposition \ref{bdg}, we have 
		\begin{align*}
			\hat{\mathbb E} \sup_{s \in [0,t]} |\hat{X}_s|^2 &\leq CT \hat{\mathbb E} \int_0^t |\hat{X}_s|^2 + |\hat{y}_s|^2 ds + C \hat{\mathbb E} \int_0^t |\hat{X}_s|^2 + |\hat{y}_s|^2 ds \\
			&\leq C(T+1) \int_0^t \hat{\mathbb E} \sup_{r \in [0,s]}|\hat{X}_r|^2 ds + C(T^2 + T) \sup_{t \in [0,T]} \hat{\mathbb E}|\hat{y}_s|^2, 
		\end{align*}
		in which $C$ is a constant only depending on $L$, for convenience, here the constant $C$ can change from line to line. Using Gronwall inequality, we can obtain that 
		\begin{equation} \label{x1}
			\hat{\mathbb E} \sup_{t \in [0,T]} |\hat{X}_t|^2 \leq C_T \sup_{t \in [0,T]} \hat{\mathbb E}|\hat{y}_t|^2, 
		\end{equation}
		where $C_T$ is a constant depending on $T$ and $L$, with $C_T \to 0$, as $T \to 0$. \par
		Similarly, applying It\^o formula to $|\hat{Y}_t|^2$, we get 
		\begin{align*}
			|\hat{Y}_t|^2 =& |\Phi(X_T) - \Phi(X'_T)|^2 + 2\int_t^T \hat{Y}_s \cdot [f(s,X_s,Y_s,Z_s) - f(s,X'_s,Y'_s, Z'_s)] ds \\
			&+ 2\int_t^T \hat{Y}_s \cdot [g(s,X_s,Y_s,Z_s) - g(s,X'_s,Y'_s, Z'_s)] d\langle B \rangle_s \\
			&- 2\int_t^T \hat{Y}_s \cdot \hat{Z}_s dB_s -2\int_t^T \hat{Y}_s d\hat{K}_s - \int_t^T |\hat{Z}_s|^2 d\langle B \rangle_s, 
		\end{align*}
		where $\hat{Z}_t = Z_t - Z'_t$, $\hat{K}_t = K_t - K'_t$. By the Lipschitz conditions and Proposition \ref{bdg}, also noticing the inequality $2|\hat{Y}_t|\cdot|\hat{Z}_t| \leq \frac{1}{\varepsilon} |\hat{Y}_t|^2 + \varepsilon |\hat{Z}_t|^2$, we obtain 
		\begin{align*}
			|\hat{Y}_t|^2 \leq& L^2|\hat{X}_T|^2 + C\int_t^T |\hat{X}_s|^2 + |\hat{Y}_s|^2 ds - 2\int_t^T \hat{Y}_s \cdot \hat{Z}_s dB_s \\
			&+ 2(\int_t^T \hat{Y}_s^+ dK'_s + \int_t^T \hat{Y}_s^- dK_s) - 2(\int_t^T \hat{Y}_s^+ dK_s + \int_t^T \hat{Y}_s^- dK'_s) \\
			\leq& L^2|\hat{X}_T|^2 + C\int_t^T |\hat{X}_s|^2 + |\hat{Y}_s|^2 ds - 2(J_T - J_t), 
		\end{align*}
		in which $J_t \coloneqq \int_0^t \hat{Y}_s \cdot \hat{Z}_s dB_s + \int_0^t \hat{Y}_s^+ dK'_s + \int_t^T \hat{Y}_s^- dK_s$ is a G-martingale by Lemma \ref{g-martin}. Moving $(J_T-J_t)$ to the left hand side, taking conditional expectation $\hat{\mathbb E}_t$ first and then taking expectation $\hat{\mathbb E}$ on both sides, we get 
		\begin{align*}
			\hat{\mathbb E} |\hat{Y}_t|^2 \leq& L^2\hat{\mathbb E}|\hat{X}_T|^2 + C\hat{\mathbb E}\int_t^T |\hat{X}_s|^2 + |\hat{Y}_s|^2 ds \\
			\leq& L^2 \hat{\mathbb E}|\hat{X}_T|^2 + CT \hat{\mathbb E}\sup_{t \in [0,T]}|\hat{X}_t|^2 + C\int_t^T \hat{\mathbb E}|\hat{Y}_s|^2 ds, 
		\end{align*}
		by Gronwall inequality and $(\ref{x1})$, we have the following estimation 
		\begin{equation*}
			\hat{\mathbb E}|\hat{Y}_t|^2 \leq C_T \sup_{s \in [0,T]}\hat{\mathbb E}|\hat{y}_s|^2, 
		\end{equation*}
		thus $\sup\limits_{t \in [0,T]}\hat{\mathbb E}|\hat{Y}_t|^2 \leq C_T \sup\limits_{t \in [0,T]}\hat{\mathbb E}|\hat{y}_t|^2$, where $C_T \to 0$ as $T \to 0$. Then there exists a constant $\delta = \delta(L) > 0$, such that $C_T \leq \frac 12$ when $T \leq \delta$. In other words, $\mathcal I$ is contraction when $T$ is small enough. \par 
		By contraction mapping theorem, we know that there exists a unique process $Y \in \tilde{S}_G^2(0,T)$ satisfies equation (\ref{eq}). Noticing $S_G^2(0,T) \subset \tilde{S}_G^2(0,T)$, the linear growth and integrability of coefficients, we claim that $Y \in S_G^2(0,T)$ and the uniqueness still holds. Putting this $Y$ into GSDE (\ref{fsde}), we get a unique solution $X$; then putting $X$ into BGSDE (\ref{bsde}), by the existence and uniqueness of solutions to BGSDEs (see \cite{bsde}), we know the uniqueness of $(Z,K)$. Therefore, when $T$ is small enough, FBGSDE (\ref{eq}) has a unique solution $(X,Y,Z,K)$. 
	\end{proof}
	
	\subsection{Dependence upon coefficients}
	
	We have already got the existence and uniqueness of solutions to FBGSDEs which have forms (\ref{eq}), when $T$ is small enough. In this subsection, we will see how those solutions depend on their coefficients, which is the key estimate in this paper. 
	
	\begin{prop} \label{estimate}
		Assume that $(b,h,\sigma,f,g,\Phi)$ and $(b',h',\sigma',f',g',\Phi')$ satisfy Assumption \ref{h1}, and let $(X,Y,Z,K)$, $(X',Y',Z',K')$ be the solutions of FBGSDEs with corresponding coefficients, starting from $x$ and $x'$ respectively. Then there exist constants $\tilde{\delta} = \tilde{\delta}(L)$ and $C = C(L)$, such that for any $2 < 2+\alpha < \beta$, when $T \leq \tilde{\delta}$, we have 
		\begin{align*}
			&\hat{\mathbb E}\sup_{t \in [0,T]}|\hat{X}_t|^2 + \hat{\mathbb E}\sup_{t \in [0,T]}|\hat{Y}_t|^2 + \hat{\mathbb E}\int_0^T |\hat{Z}_t|^2 dt + \hat{\mathbb E} |\hat{K}_T|^2 \\
			\leq& C \big[ I_0 + I_\alpha + I_\alpha^{1/(2+\alpha)} \big], 
		\end{align*}
		in which 
		\begin{align*}
			I_\alpha =& |x-x'|^{2+\alpha} + \hat{\mathbb E} \int_0^T (|\hat b|^{2+\alpha} + |\hat h|^{2+\alpha} + |\hat \sigma|^{2+\alpha})(s,X_s,Y_s) ds \\
			&+ \hat{\mathbb E} |\hat \Phi(X_T)|^{2+\alpha} + \hat{\mathbb E} \int_0^T (|\hat f|^{2+\alpha} + |\hat g|^{2+\alpha})(s,X_s,Y_s,Z_s) ds, 
		\end{align*}
		\begin{equation*}
			\hat \varphi(t,x,y,z) = \varphi(t,x,y,z) - \varphi'(t,x,y,z), \quad \text{for} \quad \varphi = b,h,\sigma,f,g,\Phi. 
		\end{equation*}
	\end{prop}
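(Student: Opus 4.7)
The plan is to adapt the argument of Theorem~\ref{thm1} into an a priori estimate. For each coefficient $\varphi\in\{b,h,\sigma,f,g,\Phi\}$, I split the difference as
\begin{equation*}
\varphi(s,X_s,\cdots)-\varphi'(s,X'_s,\cdots)=\bigl[\varphi'(s,X_s,\cdots)-\varphi'(s,X'_s,\cdots)\bigr]+\hat\varphi(s,X_s,\cdots),
\end{equation*}
so the Lipschitz bracket is absorbed by Gronwall while $\hat\varphi$ enters as a data driver. Applied to the forward equation, Proposition~\ref{bdg} together with Gronwall yields
\begin{equation*}
\hat{\mathbb E}\sup_{t\in[0,T]}|\hat X_t|^2\leq C_T\Bigl[|x-x'|^2+\hat{\mathbb E}\int_0^T(|\hat b|^2+|\hat h|^2+|\hat\sigma|^2)(s,X_s,Y_s)\,ds+\sup_{t\in[0,T]}\hat{\mathbb E}|\hat Y_t|^2\Bigr],
\end{equation*}
with $C_T\to 0$ as $T\to 0$, the first two brackets being dominated by $I_0$.

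For the backward equation I would apply It\^o to $|\hat Y_t|^2$, split $f-f'$ and $g-g'$ in the same way, and use $2|\hat Y_s||\hat Z_s|\leq\varepsilon^{-1}|\hat Y_s|^2+\varepsilon|\hat Z_s|^2$ to keep a positive $|\hat Z_s|^2\,d\langle B\rangle_s$ on the left. Following the proof of Theorem~\ref{thm1}, the $dB_s$-integral together with the $dK_s$ and $dK'_s$ contributions has to be packaged into a single G-martingale $J_t$ via Lemma~\ref{g-martin}, so that $\hat{\mathbb E}_t(J_T-J_t)=0$. Taking $\hat{\mathbb E}_t$, then $\sup_{t\in[0,T]}$, then $\hat{\mathbb E}$, produces data terms of the form $\hat{\mathbb E}\sup_t\hat{\mathbb E}_t[\xi]$ where $\xi$ is $|\hat\Phi(X_T)|^2$ or $\int_0^T(|\hat f|^2+|\hat g|^2)(s,X_s,Y_s,Z_s)\,ds$. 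Here the classical $L^2$ Doob estimate fails in the G-framework, so I would invoke Remark~\ref{redoob} at the larger moment scale $2+\alpha$ to obtain
\begin{equation*}
\hat{\mathbb E}\sup_{t\in[0,T]}\hat{\mathbb E}_t[\xi]\leq C\bigl(\hat{\mathbb E}|\xi|^{(2+\alpha)/2}\bigr)^{2/(2+\alpha)},
\end{equation*}
which after a Young splitting contributes the $I_\alpha$ and $I_\alpha^{1/(2+\alpha)}$ summands appearing in the statement; this is the step that uses $\beta>2+\alpha$ in Assumption~\ref{h1}.

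For $T\leq\tilde\delta$ sufficiently small, the coefficient of $\sup_t\hat{\mathbb E}|\hat Y_t|^2$ in the forward bound can be absorbed, closing the estimate on $\hat X$ and $\hat Y$. The $\hat Z$ bound follows from reading the It\^o expansion of $|\hat Y_t|^2$ at $t=0$, and $\hat{\mathbb E}|\hat K_T|^2$ is obtained by isolating $\hat K_T$ in the BSDE difference
\begin{equation*}
\hat K_T=-\hat Y_0+\bigl[\Phi(X_T)-\Phi'(X'_T)\bigr]+\int_0^T(f-f')\,ds+\int_0^T(g-g')\,d\langle B\rangle_s-\int_0^T\hat Z_s\,dB_s
\end{equation*}
and squaring via Proposition~\ref{bdg}. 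The main obstacle is the interaction between the asymmetric G-martingale difference $\hat K=K-K'$, which itself is not a G-martingale, and the conditional expectation: the integral $\int\hat Y\,d\hat K$ cannot be discarded directly, and only the decomposition of Lemma~\ref{g-martin} turns the aggregate of the $Z$-stochastic integral and the $K$-integrals into a genuine G-martingale whose conditional G-expectation vanishes. An intertwined difficulty is reconciling the $L^2$-scale on the left with the conditional-G-expectation Doob bound, which only holds at a strictly larger moment scale; this forces the auxiliary exponent $\alpha>0$ and is the origin of the somewhat unusual factor $I_\alpha^{1/(2+\alpha)}$ in the conclusion.
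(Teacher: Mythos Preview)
Your outline captures the right ingredients (the coefficient splitting, Lemma~\ref{g-martin} for the $K$-terms, and the Doob-type Remark~\ref{redoob} to handle the $\hat{\mathbb E}\sup_t\hat{\mathbb E}_t[\cdot]$ quantities), but the order in which you try to close the estimate does not work, and this is precisely the point where the paper's proof differs from yours.

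When you take $\hat{\mathbb E}_t$, then $\sup_t$, then $\hat{\mathbb E}$ on the It\^o expansion of $|\hat Y_t|^2$, it is not only the \emph{data} terms $|\hat\Phi(X_T)|^2$ and $\int_0^T(|\hat f|^2+|\hat g|^2)\,ds$ that appear under $\hat{\mathbb E}\sup_t\hat{\mathbb E}_t[\cdot]$; the Lipschitz contributions $L^2|\hat X_T|^2$ and $C\int_t^T(|\hat X_s|^2+|\hat Y_s|^2)\,ds$ do as well. Bounding, say, $\hat{\mathbb E}\sup_t\hat{\mathbb E}_t\bigl[\int_0^T|\hat Y_s|^2\,ds\bigr]$ by Remark~\ref{redoob} requires $\bigl(\hat{\mathbb E}(\int_0^T|\hat Y_s|^2\,ds)^{(2+\alpha)/2}\bigr)^{2/(2+\alpha)}$, hence a $(2+\alpha)$-moment of $\hat Y$ (and likewise of $\hat X$), which you have not established; working only at the $L^2$ scale you cannot absorb this term even for small $T$. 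The same obstruction hits the $|\hat X_T|^2$ term coming from the Lipschitz property of $\Phi'$.

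The paper resolves this by \emph{first} running the whole Theorem~\ref{thm1} argument at exponent $2+\alpha$: apply It\^o to $|\hat Y_t|^{2+\alpha}$, take $\hat{\mathbb E}_t$ then $\hat{\mathbb E}$ (no supremum inside), and use Gronwall in the $\tilde S_G^{2+\alpha}$ norm. This closes without any Doob inequality and yields
\[
\hat{\mathbb E}\sup_{t}|\hat X_t|^{2+\alpha}+\sup_t\hat{\mathbb E}|\hat Y_t|^{2+\alpha}\leq C I_\alpha .
\]
Only then does the paper pass to $\hat{\mathbb E}\sup_t|\hat Y_t|^2$ and $\hat{\mathbb E}\int_0^T|\hat Z_t|^2\,dt$, and it does so not by reading It\^o at $t=0$ but by invoking the ready-made BSDE a~priori estimates (Propositions~3.8 and~3.9 of \cite{bsde}), whose right-hand sides are of the form $\hat{\mathbb E}\sup_t\hat{\mathbb E}_t[\cdot]$ evaluated on the terminal and driver differences; Remark~\ref{redoob} then converts these into $(2+\alpha)$-moments, which are already controlled by the displayed bound. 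That two-stage structure---first a $(2+\alpha)$-moment estimate in the weaker $\tilde S_G$ topology, then the $S_G^2$ upgrade via \cite{bsde} plus Doob---is the missing idea in your sketch.
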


	\begin{proof}
		Under the same notations used in the proof of Theorem \ref{thm1}. Noticing that $\hat{X}_t$ satisfies 
		\begin{align*}
			\hat{X}_t = \hat x &+ \int_0^t b'(s,X_s,Y_s) - b'(s,X'_s,Y'_s) ds + \int_0^t \hat{b}(s,X_s,Y_s) ds \\
			&+ \int_0^t h'(s,X_s,Y_s) - h'(s,X'_s,Y'_s) d\langle B \rangle_s + \int_0^t \hat{h}(s,X_s,Y_s) d\langle B \rangle_s \\
			&+ \int_0^t \sigma'(s,X_s,Y_s) - \sigma'(s,X'_s,Y'_s) dB_s + \int_0^t \hat{\sigma}(s,X_s,Y_s) dB_s, 
		\end{align*}
		by the Lipschitz conditions, Proposition \ref{bdg} and Gronwall inequality, with simple calculations, we have 
		\begin{equation} \label{x2}
			\hat{\mathbb E}\sup_{t \in [0,T]} |\hat{X}_t|^{2+\alpha} \leq C_T \sup_{t \in [0,T]}\hat{\mathbb E} |\hat{Y}_t|^{2+\alpha} + CI_\alpha. 
		\end{equation}
		Similarly, applying It\^o formula on $|\hat{Y}_t|^{2+\alpha}$, we get 
		\begin{align*}
			|\hat{Y}_t|^{2+\alpha} =& |\Phi'(X_T) - \Phi'(X'_T) + \hat{\Phi}(X_T)|^{2+\alpha} \\
			&+ (2+\alpha) \int_t^T |\hat{Y}_s|^{1+\alpha} \cdot [f'(s,X_s,Y_s,Z_s) - f'(s,X'_s,Y'_s,Z'_s)] ds \\
			&+ (2+\alpha) \int_t^T |\hat{Y}_s|^{1+\alpha} \cdot \hat{f}(s,X_s,Y_s,Z_s) ds \\
			&+ (2+\alpha) \int_t^T |\hat{Y}_s|^{1+\alpha} \cdot [g'(s,X_s,Y_s,Z_s) - g'(s,X'_s,Y'_s,Z'_s)] d\langle B \rangle_s \\
			&+ (2+\alpha) \int_t^T |\hat{Y}_s|^{1+\alpha} \cdot \hat{g}(s,X_s,Y_s,Z_s) d\langle B \rangle_s \\
			&- (2+\alpha) \int_t^T |\hat{Y}_s|^{1+\alpha} \cdot \hat{Z}_s dB_s - (2+\alpha) \int_t^T |\hat{Y}_s|^{1+\alpha} d\hat{K}_s \\
			&- \frac{(2+\alpha)(1+\alpha)}{2} \int_t^T |\hat{Y}_s|^\alpha \cdot |\hat{Z}_s|^2 d\langle B \rangle_s. 
		\end{align*}
		Using the same method in the proof of Theorem \ref{thm1}, combining (\ref{x2}), we find, when $T$ is small enough, 
		\begin{equation} \label{inq}
			\hat{\mathbb E}\sup_{t \in [0,T]}|\hat{X}_t|^{2+\alpha} + \sup_{t \in [0,T]}\hat{\mathbb E}|\hat{Y}_t|^{2+\alpha} \leq CI_\alpha. 
		\end{equation}
		Let $\alpha = 0$ we know that $\hat{\mathbb E}\sup_{t \in [0,T]}|\hat{X}_t|^2 \leq CI_0$. As for $\hat{\mathbb E}\sup_{t \in [0,T]}|\hat{Y}_t|^2$, noticing that for any $2 < 2+\alpha' < 2+\alpha < \beta$ (see Proposition 3.9 in \cite{bsde}), 
		\begin{align} \label{y1}
			\hat{\mathbb E}\sup_{t \in [0,T]}|\hat{Y}_t|^2 \leq& C \big[ \hat{\mathbb E}\sup_{t \in [0,T]}\hat{\mathbb E}_t|\hat{X}_T|^2 \\
			&+ \big( \hat{\mathbb E}\sup_{t \in [0,T]}\hat{\mathbb E}_t \left|\int_0^T f(s,X_s,Y_s,Z_s) - f'(s,X'_s,Y_s,Z_s) ds\right|^{2+\alpha'} \big)^{2/(2+\alpha')} \notag\\
			&+ \big( \hat{\mathbb E}\sup_{t \in [0,T]}\hat{\mathbb E}_t \left|\int_0^T g(s,X_s,Y_s,Z_s) - g'(s,X'_s,Y_s,Z_s) ds\right|^{2+\alpha'} \big)^{2/(2+\alpha')} \notag\\
			&+ \hat{\mathbb E}\sup_{t \in [0,T]}\hat{\mathbb E}_t \left|\int_0^T f(s,X_s,Y_s,Z_s) - f'(s,X'_s,Y_s,Z_s) ds\right|^{2+\alpha'} \notag\\
			&+ \hat{\mathbb E}\sup_{t \in [0,T]}\hat{\mathbb E}_t \left|\int_0^T g(s,X_s,Y_s,Z_s) - g'(s,X'_s,Y_s,Z_s) ds\right|^{2+\alpha'} \big], \notag
		\end{align}
		combining (\ref{inq}) and Proposition \ref{doob} with Remark \ref{redoob}, we obtain 
		\begin{equation*}
			\hat{\mathbb E}\sup_{t \in [0,T]}\hat{\mathbb E}_t|\hat{X}_T|^2 \leq C (\hat{\mathbb E}|\hat{X}_T|^{2+\alpha})^{2/(2+\alpha)} \leq C I_\alpha^{2/(2+\alpha)}, 
		\end{equation*}
		and 
		\begin{align*}
			&\hat{\mathbb E}\sup_{t \in [0,T]}\hat{\mathbb E}_t \left|\int_0^T f(s,X_s,Y_s,Z_s) - f'(s,X'_s,Y_s,Z_s) ds\right|^{2+\alpha'} \\
			\leq& C (\hat{\mathbb E} \left| \int_0^T f(s,X_s,Y_s,Z_s) - f'(s,X'_s,Y_s,Z_s) ds\right|^{2+\alpha})^{(2+\alpha')/(2+\alpha)},
		\end{align*}
		in which 
		\begin{align*}
			&\hat{\mathbb E} \left| \int_0^T f(s,X_s,Y_s,Z_s) - f'(s,X'_s,Y_s,Z_s) ds\right|^{2+\alpha} \\
			\leq& C \hat{\mathbb E} \int_0^T |\hat{X}_s|^{2+\alpha} + |\hat{f}|^{2+\alpha}(s,X_s,Y_s,Z_s) ds \leq CI_\alpha, 
		\end{align*}
		the terms involving $g$ and $g'$ is similar. Putting the inequalities above into (\ref{y1}), we get 
		\begin{equation*}
			\hat{\mathbb E}\sup_{t \in [0,T]}|\hat{Y}_t|^2 \leq C \big[ I_\alpha^{2/(2+\alpha)} + I_\alpha^{(2+\alpha')/(2+\alpha)} \big] \leq C\big[I_\alpha + I_\alpha^{2/(2+\alpha)} \big]. 
		\end{equation*}
		In the same way, by Proposition 3.8 in \cite{bsde}, with simple calculations, we have 
		\begin{equation*}
			\hat{\mathbb E}\int_0^T |\hat{Z}_t|^2 dt \leq C\big[I_\alpha + I_\alpha^{2/(2+\alpha)} + I_\alpha^{1/2} + I_\alpha^{1/(2+\alpha)}\big] \leq C\big[I_\alpha + I_\alpha^{1/(2+\alpha)}\big]. 
		\end{equation*}
		Finally, for $K$, noticing that 
		\begin{align*}
			\hat{K}_T =& \hat{Y}_T -\hat{Y}_0 + \int_0^T f(s,X_s,Y_s,Z_s) - f'(s,X'_s,Y'_s,Z'_s) ds \\
			&+ \int_0^T g(s,X_s,Y_s,Z_s) - g'(s,X'_s,Y'_s,Z'_s) d\langle B \rangle_s - \int_0^T \hat{Z}_s dB_s, 
		\end{align*}
		by simple calculations, we obtain 
		\begin{equation*}
			\hat{\mathbb E}|\hat{K}_T|^2 \leq C\big[ I_0 + I_\alpha + I_\alpha^{1/(2+\alpha)} \big]. 
		\end{equation*}
		In summary, we complete our proof. 
	\end{proof}
	
	\begin{remark}
		For convenience, when saying $\delta = \delta(L)$ in the rest of this paper we means $\delta = \delta(L) \wedge \tilde{\delta}(L)$, i.e., when $T \leq \delta$, FBGSDE (\ref{eq}) has a unique solution with the estimation above. 
	\end{remark}
	
	\begin{remark}
		$(b_n, h_n, \sigma_n, f_n, g_n, \Phi_n) \to (b, h, \sigma, f, g, \Phi)$ cannot imply the convergence of corresponding solutions or even $I_\alpha \to 0$, since the lack of dominated convergence theorem in the G-expectation framework. However, if we know that the coefficients are uniformly convergent, then their solutions will converge by Proposition \ref{estimate}. 
	\end{remark}
	
	\section{Non-degenerate diffusion coefficient case} \label{nondeg}
	
	\subsection{Fully nonlinear PDEs}
	
	At the beginning of this section, we introduce some results of fully nonlinear PDEs, which have forms 
	\begin{equation} \label{pde}
		\left\{
		\begin{array}{l}
			u_t + F(u_{ij}, u_i, u, t, x) = 0 \text{ in } Q, \\
			u = \varphi \text{ on } \partial'Q, 
		\end{array}
		\right.
	\end{equation}
	where $Q = (0,T) \times D$, $D \subset \mathbb R^n$, and $\partial'Q$ denotes the parabolic boundary of $Q$. We refer readers to \cite{pde} for more details. First of all, we introduce two important spaces $F(\varepsilon,K,Q)$ and $\overline F(\varepsilon,K,Q)$. 
	
	\begin{definition} \label{def-f}
		We say function $F(u_{ij}, u_i, u, t, x) \in F(\varepsilon,K,Q)$ if for every $t$, $F(u_{ij}, u_i, u, t, x)$ is twice continuous differentiable with respect to $(u_{ij},u_i,u,x)$, and for any symmetric matrix $(u_{ij})$, $F$ satisfies 
		\begin{enumerate}[(i)]
			\item $\varepsilon |\lambda|^2 \leq \sum F_{u_{ij}}\lambda_i\lambda_j \leq K |\lambda|^2$; 
			\item $|F - \sum F_{u_{ij}}u_{ij}| \leq M_1^F(u)(1 + \sum|u_i|^2)$; 
			\item $|F_{u_i}|(1+\sum|u_i|) + |F_u| + |F_x|(1+\sum|u_i|)^{-1} \leq M_1^F(u)(1+\sum|u_i|^2+\sum|u_{ij}|)$; 
			\item $[M_2^F(u,u_k)]^{-1} F_{(\eta)(\eta)} \leq \sum|\tilde u_{ij}| \big[ \sum|\tilde u_i| + (1+\sum|u_{ij}|)(|\tilde u| + |\tilde x|) \big]$ \\
			$\left.\right. $ \hspace{3.5cm} $+ \sum|\tilde u_i|^2(1+\sum|u_{ij}|) + (1+\sum|u_{ij}|^3)(|\tilde u|^2 + |\tilde x|^2)$; 
			\item $|F_t| \leq M_3^F(u,u_k)(1+\sum|u_{ij}|^2)$, 
		\end{enumerate}
		where $M_1^F(u), M_2^F(u,u_k), M_3^F(u,u_k)$ are some continuous functions which grow with $|u|$ and $|u_k|^2$, $M_2^F \geq 1$, $\eta = (\tilde u_{ij}, \tilde u_i, \tilde u, \tilde x)$, and 
		\begin{align*}
			F_{(\eta)(\eta)} \coloneqq& F_{u_{ij}u_{rs}} \tilde u_{ij} \tilde u_{rs} + 2F_{u_{ij}u_r} \tilde u_{ij}\tilde u_r + 2F_{u_{ij}u} \tilde u_{ij}\tilde u + 2F_{u_{ij}x} \tilde u_{ij}\tilde x \\
			&+ F_{u_iu_j} \tilde u_i\tilde u_j + 2F_{u_iu} \tilde u_i\tilde u + 2F_{u_ix} \tilde u_i\tilde x + F_{uu} \tilde u^2 + 2F_{ux} \tilde u\tilde x + F_{xx} \tilde x^2, 
		\end{align*}
	\end{definition}
	
	\begin{definition}
		We say $F(u_{ij},u_i,u,t,x) \in \overline F(\varepsilon,K,Q)$, if there exist $F_n \in F(\varepsilon,K,Q)$, such that $F_n \to F$, and satisfy 
		\begin{enumerate}[(i)]
			\item $M_i^{F_1} = M_i^{F_2} = \cdots \eqqcolon M_i^F$, $i = 1,2,3$; 
			\item $F_n$ is infinitely differentiable with respect to $(u_{ij},u_i,u,x)$; 
			\item there exist positive constants $\delta$ and $M_0$, such that for any $n \geq 1$ and for any $(u_{ij}) \in S_d^+$, 
			\begin{equation*}
				F_n(u_{ij},0,-M_0,t,x) \geq \delta_0, \quad F_n(-u_{ij},0,M_0,t,x) \leq -\delta_0, 
			\end{equation*}
		\end{enumerate}
	\end{definition}
	
	\begin{remark} \label{re}
		Let $\varepsilon \mathrm{I} \leq (A_{ij}) \leq K\mathrm{I}$, and let $A_{ij}, B_i, C$ be infinitely differentiable with bounded first two derivatives, which grow with $|u|$ and $|u_i|$. Constant $c \leq -L$, where $L$ denotes the growth speed of $C$ with respect to $u$, then it is easy to check that 
		\begin{equation*}
			F(u_{ij},u_i,u,t,x) = \sum_{ij}A_{ij}(t,x,u)u_{ij} + \sum_iB_i(t,x,u)u_i + cu + C(t,x,u,u_i)
		\end{equation*}
		belongs to $\overline F(\varepsilon, K, Q)$. 
	\end{remark}
	
	The next lemma is helpful for us to determine whether a function $F$ belongs to $\overline F(\varepsilon,K,Q)$, which can be found in Chapter 6, \cite{pde}. 
	
	\begin{lemma} \label{sup}
		Let $\{F_\lambda \colon \lambda \in \Lambda \}$ be a class of functions with $F_\lambda \in \overline F(\varepsilon,K,Q)$, if $M_i^{F_\lambda}, \delta_0^{F_\lambda}, M_0^{F_\lambda}$ are independent of $\lambda$, $i = 1,2,3$, then $F \coloneqq \sup\{ F_\lambda \colon \lambda \in \Lambda \} \in \overline F(\varepsilon,K,Q)$. 
	\end{lemma}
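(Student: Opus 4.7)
The plan is to exhibit, for the candidate $F := \sup_{\lambda \in \Lambda} F_\lambda$, an approximating sequence of infinitely differentiable functions in $F(\varepsilon,K,Q)$ with the prescribed common constants $M_i, \delta_0, M_0$, which would place $F$ in $\overline F(\varepsilon,K,Q)$ directly by definition. A first reduction: each $F_\lambda \in \overline F(\varepsilon,K,Q)$ already carries its own smooth approximating sequence $\{F_{\lambda,n}\}_{n \geq 1} \subset F(\varepsilon,K,Q)$ with the common constants and the common boundary-type bounds, so via a diagonal argument at the end I may without loss of generality assume every $F_\lambda$ is itself $C^\infty$ in $(u_{ij}, u_i, u, x)$ and satisfies conditions (i)--(v) of Definition \ref{def-f} directly with the common $M_1, M_2, M_3$.

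Next I would verify that the pointwise sup $F$ inherits the structural bounds. Conditions (i), (ii), (iii), (v) are pointwise inequalities involving first-order derivatives and operator norms, uniform in $\lambda$; since the family $\{F_\lambda\}$ is locally equi-Lipschitz in $(u_{ij}, u_i, u, x)$ by (iii), the sup $F$ is also locally Lipschitz, and these inequalities transfer to $F$ at every point of differentiability, hence hold almost everywhere. The uniform bounds $F_n(u_{ij},0,\pm M_0,t,x) \gtrless \pm \delta_0$ at the boundary of the ellipticity set transfer to $F$ term-by-term. To restore smoothness I would mollify $F$ in the arguments $(u_{ij}, u_i, u, x)$ by a kernel $\rho_{1/n}$ to obtain $F_n \in C^\infty$; conditions (i)--(iii) and (v) all survive the convolution with the same constants (ellipticity is linear in the $u_{ij}$-derivatives, and the growth bounds are invariant under averaging), and a diagonal combination of this mollification with the original approximating sequences $\{F_{\lambda,n}\}$ produces the desired sequence in $F(\varepsilon,K,Q)$ converging to $F$.

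The main obstacle will be condition (iv), a one-sided upper bound on the full Hessian $F_{(\eta)(\eta)}$ with respect to $\eta = (\tilde u_{ij}, \tilde u_i, \tilde u, \tilde x)$. Pointwise sups of $C^2$ functions with uniform Hessian bounds are in general only semi-convex, not semi-concave, so (iv) does not survive the sup literally. The resolution exploits the structure of the right-hand side of (iv): it is a fixed quadratic form in $\eta$ whose coefficients depend only on the argument values $(u_{ij}, u_i, u, x)$ and not on $\lambda$, so each $F_\lambda$ is second-order dominated by the same object, and this subsolution-type property is sup-stable in the viscosity sense. After mollification the inequality is recovered in the classical sense with an error that vanishes as $n \to \infty$ and can be absorbed into the $M_2$-part of the right-hand side since the latter is strictly positive away from $\eta = 0$. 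The uniformity of $M_2^{F_\lambda}$ in $\lambda$ assumed in the hypothesis is exactly what is needed to keep the constants common across the whole approximation; without that uniformity the argument collapses, which is why the statement of the lemma requires $M_i^{F_\lambda}, \delta_0^{F_\lambda}, M_0^{F_\lambda}$ to be independent of $\lambda$.
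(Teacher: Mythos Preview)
The paper does not give its own proof of this lemma; it simply cites Chapter~6 of Krylov's monograph \cite{pde}. So there is no in-paper argument to compare against, and what follows is an assessment of your sketch on its own terms.

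Your handling of conditions (i)--(iii), (v) and of the constants $\delta_0, M_0$ is reasonable: these are first-order or pointwise bounds and they do pass to the supremum and survive mollification. The real problem, as you correctly flag, is condition~(iv), and your proposed resolution of it does not go through. Condition~(iv) is an \emph{upper} bound on the Hessian form $F_{(\eta)(\eta)}$; in the pure $\tilde u_{ij}$-direction the right-hand side is identically zero, so (iv) forces concavity of $F$ in the $u_{ij}$ variables. Semiconcavity is stable under infimum, not supremum: already $\max(x,-x)=|x|$ takes two functions with Hessian~$0$ to a convex one. Your appeal to viscosity stability does not save this. It is true that viscosity subsolutions of $\lambda_{\max}(D^2F)\le C$ are stable under sup, but that viscosity notion is strictly weaker than a classical Hessian bound---$|x|$ is such a viscosity subsolution precisely because no smooth test function can touch it from above at the kink---and mollifying a function with an upward kink produces strictly positive second derivative there. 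Hence the claim that ``after mollification the inequality is recovered in the classical sense with an error that vanishes as $n\to\infty$'' is false: the error is of order one, and since the right-hand side of~(iv) vanishes in the $\tilde u_{ij}$-direction there is no slack in $M_2$ to absorb it. To close the gap you need to consult Krylov's actual construction of the approximating sequence (which is not simply the mollified supremum), or check whether the paper's transcription of~(iv) matches the sign convention in the source: a \emph{lower} bound on $F_{(\eta)(\eta)}$ would indeed be sup-stable, since a supremum of semiconvex functions is semiconvex.
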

	
	Now we give the main theorem in this subsection, where we consider the PDE (\ref{pde}) on $Q = (0,T) \times \mathbb R^n$, which is a special case of Theorem 6.4.3 in \cite{pde}. 
	
	\begin{theorem} \label{pthm}
		Given $\varphi \in C(\mathbb R^n)$ with $|\varphi| \leq M_0^F$, then PDE (\ref{pde}) admits a solution $u \in C(\mathbb R^n)$ satisfying $|u| \leq M_0^F$. Moreover, for any $0 < k < 1$, $u \in C_b^{2+\alpha_0}((0,T-k^2) \times \mathbb R^n)$, where $\alpha_0 = \alpha_0(n,\varepsilon,K)$, and its norm in this space is bounded by a constant $C(n,\varepsilon,K,M_i^F,M_0^F,k)$. 
	\end{theorem}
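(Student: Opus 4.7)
The plan is to obtain the solution by a double approximation: first replace $F$ by the smooth approximants $F_j$ furnished by the definition of $\overline F(\varepsilon,K,Q)$, then replace the unbounded domain $\mathbb R^n$ by balls $B_R$ with an appropriate Dirichlet-type boundary condition. Concretely, for each $j,R\geq 1$, I would choose a smooth extension $\varphi_R$ of $\varphi$ to the parabolic boundary of $Q_R:=(0,T)\times B_R$ with $|\varphi_R|\leq M_0^F$ and $\varphi_R(T,x)=\varphi(x)$ on $|x|\leq R-1$. Because $F_j$ is smooth in its arguments, uniformly parabolic by (i), and satisfies the structural growth conditions (ii)--(v) of Definition \ref{def-f}, classical bounded-domain theory for fully nonlinear uniformly parabolic equations (the bounded version of Theorem 6.4.3 in \cite{pde}) produces a unique classical solution $u_{j,R}\in C^{2+\alpha_0}(Q_R)\cap C(\overline{Q_R})$.

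The uniform $L^\infty$ bound $|u_{j,R}|\leq M_0^F$ then follows from comparison with the constants $\pm M_0^F$. Indeed, evaluating condition (iii) in the definition of $\overline F$ at $u_{ij}=0$, $u_i=0$ gives $F_j(0,0,M_0^F,t,x)\leq-\delta_0<0$ and $F_j(0,0,-M_0^F,t,x)\geq\delta_0>0$, so $+M_0^F$ is a supersolution and $-M_0^F$ a subsolution of the $F_j$-equation with the given terminal data, and the weak maximum principle finishes the bound.

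The crucial step, and the genuine analytic obstacle, is an interior $C^{2+\alpha_0}$ bound on $u_{j,R}$ that is uniform in both $j$ and $R$ and depends only on $n,\varepsilon,K,M_i^F,M_0^F$ and the standoff $k$ from the terminal time. Conditions (i)--(v) of Definition \ref{def-f} are engineered precisely so that the Krylov--Safonov and Evans--Krylov machinery applies: (i) and (ii) supply uniform parabolicity and the correct natural growth of $F$, giving interior H\"older and gradient estimates; (iii) provides the bounds on the first derivatives of $F$ needed to differentiate the equation and estimate $|\nabla u|$; and the quadratic bound (iv) on $F_{(\eta)(\eta)}$ together with (v) controls the second-order perturbation that arises when differentiating twice, which is the key input for an Evans--Krylov-type interior H\"older estimate on $\nabla^2 u$. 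The parabolic standoff $k^2$ in time guarantees that we never need boundary estimates and the terminal data enter only through the $L^\infty$ bound.

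Once this uniform interior estimate is in hand, a standard Arzel\`a--Ascoli diagonal argument (first $R\to\infty$, then $j\to\infty$) extracts a subsequence converging in $C^{2+\alpha'}_{\mathrm{loc}}$ for any $\alpha'<\alpha_0$, and the limit $u$ solves $u_t+F(u_{ij},u_i,u,t,x)=0$ classically on $(0,T)\times\mathbb R^n$ with $|u|\leq M_0^F$ and $u\in C_b^{2+\alpha_0}((0,T-k^2)\times\mathbb R^n)$ with the quoted norm bound. Continuity up to $t=T$ with $u(T,\cdot)=\varphi$ is then recovered by a standard barrier argument at the terminal time built from paraboloids, using only the uniform $L^\infty$ bound and the continuity of $\varphi$. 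The main obstacle lies entirely in establishing the uniform interior Schauder-type estimate for the fully nonlinear equation; the maximum principle, the approximation scheme, and the compactness step are all routine once that estimate is available.
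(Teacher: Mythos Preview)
The paper does not prove Theorem~\ref{pthm} at all; it is merely quoted as ``a special case of Theorem 6.4.3 in \cite{pde}'' (Krylov's monograph on nonlinear elliptic and parabolic equations), with no argument supplied. So there is no proof in the paper to compare your proposal against.

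That said, your sketch is a faithful outline of the standard proof one finds in that reference: approximate $F$ by the smooth $F_j$, localize to balls, obtain the $L^\infty$ bound from the barrier constants $\pm M_0^F$ via condition~(iii) of the definition of $\overline F$, derive uniform interior $C^{2+\alpha_0}$ estimates from the Krylov--Safonov and Evans--Krylov machinery (for which conditions (i)--(v) of Definition~\ref{def-f} are precisely calibrated), and pass to the limit by Arzel\`a--Ascoli. Your identification of the interior second-order H\"older estimate as the genuine analytic obstacle is correct; everything else is routine. There is nothing wrong with your proposal as a roadmap, but for the purposes of this paper the theorem is an external input, not something the authors establish.
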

	
	\subsection{Arbitrary time duration $T$}
	
	With all the preparations above, we can discuss the existence and uniqueness of solutions to the FBGSDEs with forms (\ref{eq}) for any $T > 0$. We first list the basic assumptions and give the main result of this paper. 
	
	\begin{hy} \label{h2}
		Assume $(b,h,\sigma,f,g,\Phi)$ are continuous differentiable with respect to $(x,y,z)$, we say they satisfy Assumption \ref{h2} if there exist constants $L, M, \lambda > 0$, $\beta > 2$, such that they satisfy both Assumption \ref{h1} with constants $L$ and $\beta$, and the following properties, 
		\begin{enumerate}[(i)]
			\item For any $t,s \in [0,T]$ and for any $(x,y,z) \in \mathbb R^3$, 
			\begin{align*}
				|\varphi_1(t,x,y) - \varphi_1(s,x,y)| \leq& L |t-s|, \\
				|\varphi_2(t,x,y,z) - \varphi_2(s,x,y,z)| \leq& L |t-s|,
			\end{align*} 
			where $\varphi_1 = (b,h,\sigma)^{\mathrm{T}}$, $\varphi_2 = (f,g)^{\mathrm{T}}$. 
			\item For any $t \in [0,T]$ and for any $(x,y,z), (x',y',z') \in \mathbb R^3$, 
			\begin{align*}
				|\partial_x \varphi_1(t,x,y) - \partial_x \varphi_1(t,x',y')| \leq& L(|x-x'| + |y-y'|), \\
				|\partial_y \varphi_1(t,x,y) - \partial_y \varphi_1(t,x',y')| \leq& L(|x-x'| + |y-y'|), \\
				|\partial_z \varphi_1(t,x,y) - \partial_z \varphi_1(t,x',y')| \leq& L(|x-x'| + |y-y'|), \\
				|\partial_x \varphi_2(t,x,y,z) - \partial_x \varphi_2(t,x',y',z')| \leq& L(|x-x'| + |y-y'| + |z-z'|), \\
				|\partial_y \varphi_2(t,x,y,z) - \partial_y \varphi_2(t,x',y',z')| \leq& L(|x-x'| + |y-y'| + |z-z'|), \\
				|\partial_z \varphi_2(t,x,y,z) - \partial_z \varphi_2(t,x',y',z')| \leq& L(|x-x'| + |y-y'| + |z-z'|), \\
				|\partial_x \Phi(x) - \partial_x \Phi(x')| \leq& L(|x-x'|). 
			\end{align*}
			\item For any $t \in [0,T]$ and for any $(x,y) \in \mathbb R$, 
			\begin{align*}
				|\sigma(t,x,y)| + |\Phi(x)| \leq& M, \\
				\sigma^2(t,x,y) \geq& \lambda. 
			\end{align*}
		\end{enumerate}
	\end{hy}
	
	\begin{theorem} \label{thm2}
		For any $T > 0$, if Assumption \ref{h2} holds, then FBGSDE (\ref{eq}) admits a unique solution. 
	\end{theorem}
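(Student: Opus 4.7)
The plan is to implement a four-step scheme in the $G$-framework, coupling the fully nonlinear PDE theory of Theorem~\ref{pthm} with the stability estimate of Proposition~\ref{estimate}. Applying Itô to $u(t,X_t)$, matching the $dt$ and $d\langle B\rangle_t$ coefficients, and using the canonical $G$-martingale identity $dK_t = -[2G(\eta_t)\,dt - \eta_t\,d\langle B\rangle_t]$, one is led to the associated PDE
\begin{equation*}
u_t + b(t,x,u)u_x + f(t,x,u,\sigma u_x) + 2G\bigl(\tfrac12\sigma^2(t,x,u)u_{xx} + h(t,x,u)u_x + g(t,x,u,\sigma u_x)\bigr) = 0,
\end{equation*}
with $u(T,x) = \Phi(x)$ and $\sigma = \sigma(t,x,u)$. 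Given a sufficiently regular $u$, the candidate solution is $Y_t = u(t,X_t)$, $Z_t = u_x(t,X_t)\sigma(t,X_t,Y_t)$, with $X$ solving the decoupled forward $G$-SDE obtained by substituting $u(t,x)$ for $y$.

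I would first verify that the nonlinearity $F$ defined above lies in $\overline F(\varepsilon,K,Q)$. Writing $2G(a) = \sup_{\gamma\in[\underline\sigma^2,\overline\sigma^2]}\gamma a$ exhibits $F$ as a supremum of quasilinear operators of exactly the form in Remark~\ref{re}; Assumption~\ref{h2}(ii)--(iii) (Lipschitz first derivatives, bounded $\sigma$ and $\Phi$, ellipticity $\sigma^2 \geq \lambda$) then supplies uniform control of the structural constants $M_i^F$, $M_0^F$, $\delta_0^F$, while $\varepsilon = \tfrac12\underline\sigma^2\lambda$ bounds the diffusion from below. Lemma~\ref{sup} then places $F$ in $\overline F(\varepsilon,K,Q)$ and Theorem~\ref{pthm} yields a bounded $u$, with $|u|\leq M_0^F$, that is $C_b^{2+\alpha_0}$ on every strip $(0,T-k^2)\times\mathbb R$. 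With such $u$ and bounded $u_x$ there, the decoupled forward $G$-SDE has Lipschitz-in-$x$ coefficients on that strip, hence a unique $X$; Itô applied to $u(t,X_t)$ identifies $(Y,Z,K)$ as above, and the PDE itself guarantees that $-K$ is non-decreasing and a $G$-martingale.

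The main obstacle is the loss of regularity at $t = T$, which prevents a direct application of Itô on the whole $[0,T]$, combined with the absence of a dominated convergence theorem in the $G$-framework. I would circumvent this by a two-layer approximation. First, mollify $\Phi$ into smooth $\Phi_n$ with $\Phi_n\to\Phi$ uniformly and $\|\Phi_n\|_\infty\leq M_0^F$; standard parabolic theory upgrades the corresponding $u_n$ to classical regularity up to $T$, so the four-step scheme yields a bona fide FBGSDE quadruple $(X^n,Y^n,Z^n,K^n)$ on $[0,T]$. To pass $n\to\infty$ I would partition $[0,T]$ into subintervals of length $\leq \delta(L)$ and apply Proposition~\ref{estimate} on each one, using the PDE-value at the node as intermediate terminal data: since only $\Phi$ varies and $\Phi_n\to\Phi$ uniformly, the stability quantity $I_\alpha$ vanishes on every piece, and the quadruples converge in the norms of $S_G^2\times S_G^2\times M_G^2\times L_G^2$ to a solution of (\ref{eq}). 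This patched use of Proposition~\ref{estimate} is precisely the substitute for dominated convergence. Uniqueness follows from the same partitioning: given any two solutions, Theorem~\ref{thm1} on the last subinterval $[t_{N-1},T]$ forces $Y_{t_{N-1}} = u(t_{N-1},X_{t_{N-1}})$, and backward induction through the nodes pins down $Y \equiv u(\cdot,X)$ throughout, which reduces the forward equation to a standard Lipschitz $G$-SDE and determines $X$, $Z$, and $K$ uniquely.
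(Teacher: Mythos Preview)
Your overall strategy matches the paper's: four-step scheme via the associated fully nonlinear PDE, the supremum representation of $G$ to access Theorem~\ref{pthm}, partitioning $[0,T]$ into pieces of length at most $\delta(L)$, and using Proposition~\ref{estimate} on each piece in lieu of dominated convergence. The gap is in how you invoke the PDE theory and in what you mollify.

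You claim to place $F$ directly in $\overline F(\varepsilon,K,Q)$ via Remark~\ref{re}, citing Assumption~\ref{h2}(ii)--(iii). But Remark~\ref{re} explicitly requires the building blocks $A_{ij},B_i,C$ to be \emph{infinitely differentiable} with bounded first two derivatives; Assumption~\ref{h2} supplies only $C^1$ coefficients with Lipschitz first derivatives, so the check fails and Theorem~\ref{pthm} is not available for the raw data. The paper deals with this by mollifying \emph{all} of $(b,h,\sigma,f,g,\Phi)$, not only $\Phi$: Lemma~\ref{solution} and Proposition~\ref{smoothexist} then furnish solutions $u_n$ and quadruples $\Theta^n$ for the smoothed equations, and it is the uniform convergence of the \emph{entire} coefficient set that drives $I_\alpha\to 0$ in Proposition~\ref{estimate}. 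With only $\Phi$ mollified you have no PDE solution to begin with, and on interior subintervals the terminal data you would need, namely $u_n(t_k,\cdot)$, are values of a function you have not constructed, so your piecewise stability argument cannot start.

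A second omission: the constants $M_0^F,\delta_0^F$ in the definition of $\overline F$ do not follow from boundedness of $\Phi$. Remark~\ref{re} obtains them through the explicit zeroth-order term $cu$ with $c\leq -L$, which the raw PDE does not contain. The paper introduces it via the exponential change of unknown $\tilde u(t,x)=e^{-L(T-t)}u(t,x)$ in Lemma~\ref{solution}; without that transformation condition~(iii) in the definition of $\overline F$ remains unverified.
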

	
	Before proving Theorem \ref{thm2}, we give the Feynman-Kac formula that the solution of FBGSDE (\ref{eq}) should satisfy, where we assume that all the coefficients are smooth enough. We can define function $u \colon [0,T] \times \mathbb R \to \mathbb R$ as $u(t,x) = Y_t^{t,x}$, when $T$ is small enough, where $(X^{t,x},Y^{t,x},Z^{t,x},K^{t,x})$ is the unique solution of 
	\begin{equation} \label{eqtx}
		\left\{
		\begin{array}{l}
			dX_s^{t,x} = b(s,X_s^{t,x},Y_s^{t,x}) ds + h(s,X_s^{t,x},Y_s^{t,x}) d\langle B \rangle_s + \sigma(s,X_s^{t,x},Y_s^{t,x}) dB_s, \\
			dY_s^{t,x} = -f(s,X_s^{t,x},Y_s^{t,x},Z_s^{t,x}) ds - g(s,X_s^{t,x},Y_s^{t,x},Z_s^{t,x}) d\langle B \rangle_s + Z_s^{t,x} dB_s + dK_s^{t,x}, \\
			X_t^{t,x} = x, \quad Y_T^{t,x} = \Phi(X_T^{t,x}). 
		\end{array}
		\right. 
	\end{equation}
	Suppose that $u \in C_b^{1,2}$ on which we can apply It\^o formula. By the uniqueness of solutions we know that $Y_t = u(t,X_t)$, then applying It\^o formula on $u(t,X_t)$, we can find that  $Z_t = u_x(t,X_t) \cdot \sigma(t,X_t,u(t,X_t))$, $K_t = \int_0^t \frac 12u_{xx}\sigma^2 + u_xh + g d\langle B \rangle_s - \int_0^t G(u_{xx}\sigma^2 + 2u_xh + 2g) ds$, and $u$ satisfies 
	\begin{equation} \label{pde*}
		\left\{
		\begin{array}{l}
			u_t(t,x) + u_x(t,x)b(t,x,u(t,x)) + G(u_{xx}(t,x)\sigma^2(t,x,u(t,x)) \\
			+ 2u_x(t,x)h(t,x,u(t,x)) + 2g(t,x,u(t,x),u_x(t,x)\sigma(t,x,u(t,x)))) \\
			+ f(t,x,u(t,x),u_x(t,x)\sigma(t,x,u(t,x))) = 0, \\
			u(T,x) = \Phi(x). 
		\end{array}
		\right.
	\end{equation}
	
	\begin{lemma} \label{solution}
		For any $T > 0$, if $(b,h,\sigma,f,g,\Phi)$ are infinitely differentiable with bounded first two derivatives, whose boundaries are denoted by $L$, $(\sigma, \Phi)$ are bounded, and $\sigma$ satisfies uniformly elliptic condition. Also let Assumption \ref{h1} hold true, then there is a solution to PDE (\ref{pde*}). 
	\end{lemma}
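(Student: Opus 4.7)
The plan is to reduce PDE (\ref{pde*}) to a form to which Theorem \ref{pthm} applies, by combining the representation $G(a) = \sup_{\gamma \in \Gamma}\frac{\gamma}{2}a$ with $\Gamma = [\underline{\sigma}^2, \overline{\sigma}^2]$ and Lemma \ref{sup}. Expanding $G$ in (\ref{pde*}), the PDE rewrites as $u_t + \sup_{\gamma \in \Gamma}F_\gamma(u_{xx}, u_x, u, t, x) = 0$ with $u(T,\cdot) = \Phi$, where
\[
F_\gamma = \tfrac{\gamma}{2}\sigma^2(t,x,u)u_{xx} + (b+\gamma h)(t,x,u)u_x + \gamma g(t,x,u,u_x\sigma(t,x,u)) + f(t,x,u,u_x\sigma(t,x,u)).
\]
Each $F_\gamma$ is of the quasilinear form appearing in Remark \ref{re}, with uniformly elliptic leading coefficient $\frac{\gamma}{2}\sigma^2 \in [\underline{\sigma}^2\lambda/2,\, \overline{\sigma}^2 M^2/2]$, and coefficients that are $C^\infty$ with bounded first two derivatives since $b,h,\sigma,f,g$ enjoy these properties by hypothesis.

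However, the zeroth-order coefficient of $F_\gamma$ is zero, whereas Remark \ref{re} requires it to dominate the Lipschitz constant of the inhomogeneous term in $u$. I would therefore perform the standard change of variables $\tilde u(t,x) = e^{-K(T-t)}u(t,x)$ for a constant $K = K(L,\overline{\sigma},M,T)$ to be chosen. Using the positive homogeneity $G(\lambda a) = \lambda G(a)$ for $\lambda \geq 0$, a direct computation (dividing through by $e^{K(T-t)}$ and absorbing the surviving factors inside the $g, f$ terms) gives
\[
\tilde u_t + \sup_{\gamma \in \Gamma}\tilde F_\gamma(\tilde u_{xx}, \tilde u_x, \tilde u, t, x) = 0, \qquad \tilde u(T,x) = \Phi(x),
\]
where $\tilde F_\gamma = A \tilde u_{xx} + B \tilde u_x - K\tilde u + C$ with $A = \frac{\gamma}{2}\sigma^2(t,x,e^{K(T-t)}\tilde u)$, $B = (b+\gamma h)(t,x,e^{K(T-t)}\tilde u)$, and $C = e^{-K(T-t)}[\gamma g + f]$ evaluated at $(t,x,e^{K(T-t)}\tilde u, e^{K(T-t)}\tilde u_x \sigma)$. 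Since the outer factor $e^{-K(T-t)}$ tames the $e^{K(T-t)}$ produced by the chain rule when differentiating $C$ in $\tilde u$, the growth speed of $C$ in $\tilde u$ is bounded by a constant depending only on $L, \overline{\sigma}, M, T$; choosing $K$ larger than this constant makes $c = -K$ satisfy the negativity requirement of Remark \ref{re} uniformly in $\gamma \in \Gamma$.

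With the uniformity in hand, Remark \ref{re} gives $\tilde F_\gamma \in \overline{F}(\varepsilon, K', Q)$ on $Q = (0,T) \times \mathbb{R}$ with structural constants $(M_1, M_2, M_3, \delta_0, M_0)$ independent of $\gamma$, provided $M_0$ is chosen at least $\|\Phi\|_\infty$, which is finite by boundedness of $\Phi$. Lemma \ref{sup} then yields $\tilde F := \sup_{\gamma \in \Gamma}\tilde F_\gamma \in \overline{F}(\varepsilon, K', Q)$, and Theorem \ref{pthm} delivers a bounded continuous solution $\tilde u$ on $[0,T] \times \mathbb{R}$ with $C_b^{2+\alpha_0}$ interior regularity on $(0, T-k^2) \times \mathbb{R}$ for every $k \in (0,1)$. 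Undoing the transformation, $u(t,x) := e^{K(T-t)}\tilde u(t,x)$ is the desired solution of PDE (\ref{pde*}).

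The main obstacle will be the bookkeeping in the second step: verifying that $\tilde F_\gamma$ satisfies all five items of Definition \ref{def-f} with constants uniform in $\gamma \in \Gamma$ after the change of variables, since the exponential factors $e^{\pm K(T-t)}$ must be carefully tracked through the nonlinear arguments $(t,x,u,u_x\sigma(t,x,u))$ entering $g$ and $f$, and the quadratic-in-$u_x$ terms produced by the chain rule on $F_{(\eta)(\eta)}$ in condition (iv) need to be dominated by the structural majorants $M_2^F$. Once this uniformity is established, the invocation of Lemma \ref{sup} and Theorem \ref{pthm} is routine.
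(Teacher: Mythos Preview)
Your approach is essentially the same as the paper's: both perform the exponential change of variables $\tilde u = e^{-K(T-t)}u$ to introduce a sufficiently negative zeroth-order term, then use the representation $G(a) = \frac12\sup_{\gamma\in\Gamma}\gamma a$ together with Remark \ref{re} and Lemma \ref{sup} to place the transformed operator in $\overline F(\varepsilon,K',Q)$, and finally invoke Theorem \ref{pthm}. The only cosmetic differences are the order in which the sup expansion and the change of variables are performed (they commute), and that the paper takes $K=L$ directly whereas you leave $K$ as a constant to be chosen; your version is slightly more careful in flagging the bookkeeping needed to verify uniformity in $\gamma$.
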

	\begin{proof}
		If $u$ is a solution of (\ref{pde*}), set $\tilde{u}(t,x) = e^{-L(T-t)}u(t,x)$, then $\tilde{u}$ satisfies 
		\begin{equation*}
			\left\{
			\begin{array}{l}
				\tilde{u}_t + \tilde{u}_x\tilde b + G(\tilde u_{xx}\tilde\sigma^2+2\tilde u_x\tilde h+2\tilde g) + \tilde f -L\tilde u = 0, \\
				\tilde u(T,x) = \Phi(x), 
			\end{array}
			\right.
		\end{equation*}
		where $\tilde b = b(t,x,e^{L(T-t)}\tilde u(t,x))$, and $\tilde h, \tilde \sigma, \tilde f, \tilde g$ are similar. Noticing that $G(\cdot)$ can be expressed as $\frac 12\sup_{\gamma \in \Gamma} \rm{tr}(\gamma \cdot)$ (see (\ref{G})), combining with remark $\ref{re}$ and lemma \ref{sup}, we know that $\tilde{u}_x\tilde b + G(\tilde u_{xx}\tilde\sigma^2+2\tilde u_x\tilde h+2\tilde g) + \tilde f -L\tilde u \in \overline{F}(\varepsilon,K,Q)$. Then by Theorem \ref{pthm}, the PDE above admits a solution $\tilde u$. Thus, $u(t,x) = e^{L(T-t)}\tilde u(t,x)$ is a solution of (\ref{pde*}). 
	\end{proof}
	
	\begin{prop} \label{smoothexist}
		Under the assumptions in Lemma \ref{solution}, for any $T > 0$, there exists a solution to the FBGSDE (\ref{eq}). 
	\end{prop}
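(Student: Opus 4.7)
The plan is the four-step scheme adapted to the $G$-framework. By Lemma \ref{solution} pick a solution $u$ of (\ref{pde*}); Theorem \ref{pthm} then gives $u\in C_b^{2+\alpha_0}((0,T-k^2)\times\mathbb R)$ for every $0<k<1$, together with $|u|\le e^{LT}M$ and continuity up to $\{T\}\times\mathbb R$ with $u(T,\cdot)=\Phi$. Freezing the $y$-variable at $u(t,x)$ in the forward coefficients yields the decoupled GSDE
\[
dX_t=b(t,X_t,u(t,X_t))\,dt+h(t,X_t,u(t,X_t))\,d\langle B\rangle_t+\sigma(t,X_t,u(t,X_t))\,dB_t,\ X_0=x.
\]
Since $u_x$ is bounded (uniformly on any $[0,T-\epsilon]\times\mathbb R$, and, using the smoothness of $\Phi$ assumed in Lemma \ref{solution} to propagate the bound backward from $T$, uniformly on $[0,T]\times\mathbb R$) and $b,h,\sigma$ are Lipschitz in $(x,y)$, the composed coefficients are Lipschitz in $x$ uniformly in $t$, so Peng's GSDE theory produces a unique $X\in S_G^2(0,T)$.

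Set $Y_t\coloneqq u(t,X_t)$ and $Z_t\coloneqq u_x(t,X_t)\sigma(t,X_t,u(t,X_t))$, and apply It\^o's formula to $u(t,X_t)$ on any sub-interval $[0,T-\epsilon]$ where $u\in C^{1,2}$. Writing $\eta_s\coloneqq u_{xx}\sigma^2+2u_xh+2g$ evaluated at $(s,X_s,Y_s,Z_s)$ and using the PDE (\ref{pde*}) to replace $u_t$, the identity rearranges to
\[
dY_t=-f\,dt-g\,d\langle B\rangle_t+Z_t\,dB_t+dK_t,\quad K_t=\tfrac12\int_0^t\eta_s\,d\langle B\rangle_s-\int_0^t G(\eta_s)\,ds.
\]
Since $\underline\sigma^2\,ds\le d\langle B\rangle_s\le\overline\sigma^2\,ds$ and $G(\eta)=\tfrac12(\overline\sigma^2\eta^+-\underline\sigma^2\eta^-)$, a sign check yields $dK_t\le 0$, and the standard $G$-calculus identifies $K$ as a $G$-martingale with $K_0=0$.

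The main obstacle is that Theorem \ref{pthm} supplies only interior regularity, with Schauder constants that may blow up as $\epsilon\downarrow 0$. I would close this gap by running the construction on $[0,T-\epsilon]$ with intermediate terminal value $u(T-\epsilon,X_{T-\epsilon})$ and then letting $\epsilon\downarrow 0$: continuity of $u$ up to $T$ together with $|u|\le e^{LT}M$ gives $u(T-\epsilon,X_{T-\epsilon})\to\Phi(X_T)$ in $L_G^2(\Omega_T)$, while convergence of the integrals defining $Z$ and $K$ on all of $[0,T]$ follows from the global $L_G^p$ bounds on $X$ (Assumption \ref{h2} supplies the higher moment $\beta>2$ needed to feed the estimates of Section \ref{eusmall}) together with the $L_G^2$-control of $K_T$ read off from the BGSDE itself. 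The resulting quadruple $(X,Y,Z,K)$ then satisfies $Y\in S_G^2(0,T)$, $Z\in M_G^2(0,T)$, and $K$ is a non-increasing $G$-martingale with $K_0=0$ and $K_T\in L_G^2(\Omega_T)$, completing the existence part.
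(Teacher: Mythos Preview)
Your overall strategy—solve the PDE, freeze $y=u(t,x)$ in the forward equation, then read off $(Y,Z,K)$ from $u$ and It\^o's formula—is exactly the four-step scheme the paper uses. The difference, and the place where your argument has a genuine gap, is the handling of the regularity loss at $t=T$.

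You first assert that $u_x$ is bounded uniformly on $[0,T]\times\mathbb R$, justifying this by ``the smoothness of $\Phi$ assumed in Lemma \ref{solution} to propagate the bound backward from $T$.'' But Theorem \ref{pthm}, which is the only PDE regularity tool on the table, gives \emph{interior} bounds on $(0,T-k^2)\times\mathbb R$ with constants depending on $k$; nothing in the paper lets you upgrade this to a bound at the terminal time, however smooth $\Phi$ is. You then acknowledge the issue and propose a limiting argument on $[0,T-\epsilon]$, but that argument is circular as written: you invoke $u(T-\epsilon,X_{T-\epsilon})\to\Phi(X_T)$ in $L_G^2$, which already presupposes that $X_T$ exists as a limit of $X_{T-\epsilon}$, and the ``global $L_G^p$ bounds on $X$'' you appeal to are exactly what needs to be established without a global Lipschitz constant for $x\mapsto u(t,x)$.

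The paper closes this gap differently and more cleanly. It never claims $u_x$ is bounded up to $T$. Instead it solves the GSDE only on $[0,T)$ (where the interior $C^{2+\alpha_0}$ bound makes the composed coefficients Lipschitz on each $[0,T-\delta]$), and then uses only the \emph{global} bound $|u|\le M_0$ together with the linear growth of $b,h,\sigma$ in $y$ to get the uniform estimate $\hat{\mathbb E}\sup_{s\in[0,t]}|X_s|^2\le M_T$ and the increment bound $\hat{\mathbb E}|X_s-X_t|^2\le C_T|t-s|$ for $s,t\in[0,T)$. This makes $(X_t)_{t\in[0,T)}$ Cauchy in $L_G^2(\Omega_T)$ and defines $X_T$ by continuity; one then checks the GSDE holds on all of $[0,T]$. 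The point is that extending $X$ to $T$ requires only the boundedness of $u$, not of $u_x$. Once $X$ is in hand on $[0,T]$, your It\^o computation on $[0,T-\epsilon]$ and the definitions $Y_t=u(t,X_t)$, $Z_t=u_x\sigma$, $K_t=\frac12\int_0^t\eta\,d\langle B\rangle-\int_0^t G(\eta)\,ds$ go through as you wrote.
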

	
	\begin{proof}
		By Lemma \ref{solution}, there exists a solution to the PDE (\ref{pde*}), which we denote by u. Then there exists a constant $M_0 > 0$, such that $|u| \leq M_0$; and for any $\delta > 0$, there exists a constant $M$, such that $\Vert u \Vert_{C^2((0,T-\delta) \times \mathbb R)} \leq M$. Then the following GSDE 
		\begin{align*}
			X_t =& x + \int_0^t b(s,X_s,u(s,X_s))ds + \int_0^t h(s,X_s,u(s,X_s))d \langle B \rangle_s \\
			&+ \int_0^t \sigma(s,X_s,u(s,X_s))dB_s, 
		\end{align*}
		admits a unique solution $\{X_t\}_{t \in [0,T)}$. Recalling the linear growth of $b,h,\sigma$ with respect to $y$, and the boundedness of $u$, we have the following estimates, 
		\begin{align*}
			\hat{\mathbb E}\sup_{s \in [0,t]}|X_s|^2 \leq& C|x|^2 + C \hat{\mathbb E} \int_0^t (1 + |u(s,X_s)|^2) ds \\
			\leq& C + C \hat{\mathbb E} \int_0^T (1+M_0^2) ds \\
			\leq& M_T, 
		\end{align*}
		where $M_T$ is a constant only depending on $T$, and for any $0 \leq s \leq t < T$, 
		\begin{equation*}
			\hat{\mathbb E} |X_s - X_t|^2 \leq C\int_s^t (1 + M_0^2) dr \leq C_T|t-s|, 
		\end{equation*}
		which implies $\{X_t\}_{t \in [0,T)}$ is a Cauchy sequence in $L_G^2(\Omega_T)$, then $X_T \coloneqq \lim\limits_{t \to T} X_t$ does exist. Moreover, $\{X_t\}_{t \in [0,T]}$ satisfies the GSDE above. Then let $Y_t = u(t,X_t)$, $Z_t = u_x(t,X_t) \cdot \sigma(t,X_t,u(t,X_t))$, and $K_t = \int_0^t \frac 12u_{xx}\sigma^2 + u_xh + g d\langle B \rangle_s - \int_0^t G(u_{xx}\sigma^2 + 2u_xh + 2g) ds$, it is not hard to verify that $(X,Y,Z,K)$ is a solution to the FBGSDE (\ref{eq}). 
	\end{proof}
	
	\begin{proof}[Proof of Theorem \ref{thm2}]
		Let $\rho(x) = c_0e^{-1/(1-x^2)}\mathbbm 1_{(-1,1)}(x)$, where $c_0 > 0$ is a constant such that $\int \rho = 1$, then $\{\rho_n(x) = n\rho(nx)\}$ is a family of kernels, and let $(b_n,h_n,\sigma_n,f_n,g_n,\Phi_n)$ be the convolutions of $(b,h,\sigma,f,g,\Phi)$ with $\rho_n$, for $n \geq 1$. Then  by Assumption \ref{h2}, $(b_n,h_n,\sigma_n,f_n,g_n,\Phi_n) \to (b,h,\sigma,f,g,\Phi)$ uniformly, which also satisfy the assumptions in Lemma \ref{solution}. Hence there exist $u_n \in C^{2+\alpha_0}$, which are solutions to PDEs with coefficients $(b_n,h_n,\sigma_n,f_n,g_n,\Phi_n)$, satisfying that for any $k \in (0,1)$, there exists $M > 0$, such that 
		\begin{equation} \label{k}
			\Vert u_n \Vert_{C^{2+\alpha_0}((0,T-k^2) \times \mathbb R)} \leq M. 
		\end{equation}
		And let $\Theta^{n,t,x} = (X^{n,t,x},Y^{n,t,x},Z^{n,t,x},K^{n,t,x})$ be the solutions of FBGSDEs (\ref{eqtx}) with coefficients $(b_n,h_n,\sigma_n,f_n,g_n,\Phi_n)$, which satisfy $Y_s^{n,t,x} = u_n(s,X_s^{n,t,x})$. \par 
		Step 1. We first construct a function $u \colon [0,T] \times \mathbb R \to \mathbb R$ by solutions of FBGSDEs, which is also the limit of $u_n$. \par 
		By Theorem \ref{thm1}, there exists a constant $\delta_1 = \delta(L)$, such that the FBGSDEs (\ref{eqtx}) admit unique solutions $\Theta^{t,x} = (X^{t,x},Y^{t,x},Z^{t,x},K^{t,x})$ for $T-t \leq \delta_1$, and we can define function $u^0(t,x)$ on $[T-\delta_1,T] \times \mathbb R$ as $u^0(t,x) = Y_t^{t,x}$, which satisfies $Y_t^{T-\delta_1,x} = u^0(t,X_t^{T-\delta_1,x})$. Then by Proposition $\ref{estimate}$, we have $(\Theta_t^{n,T-\delta_1,x})_{t \in [T-\delta_1,T]} \to (\Theta_t^{T-\delta_1,x})_{t \in [T-\delta_1,T]}$, as $n \to \infty$. Moreover, by the uniformly convergence of coefficients, we have $u_n\left|_{[T-\delta_1,T]}\right. \to u^0$ uniformly. Specially, $u_n(T-\delta_1,\cdot) \to u(T-\delta_1,\cdot)$ uniformly. Taking $k^2 = \delta_1/2$ in (\ref{k}), we have 
		\begin{equation*}
			|u_n(t,x) - u_n(t,y)| \leq M|x-y|, \quad \text{for any } t \in (0,T-\frac{\delta_1}{2}), x \in \mathbb R, n \geq 1, 
		\end{equation*}
		so $|u^0(T-\delta_1,x) - u^0(T-\delta_1,y)| \leq M|x-y|$. \par 
		Set $\delta_0 = \delta(M \vee L)$, and divide $(0,T-\delta_1)$ as $0 = t_0 < t_1 < \cdots < t_N = T-\delta_1$, in which $t_{i+1} - t_i \leq \delta_0$, $i = 0,1,\cdots,N-1$, denoting $t_{N+1} = T$ for convenience. We consider the following FBGSDE: 
		\begin{equation*}
			\left\{
			\begin{array}{l}
				dX_t = b(t,X_t,Y_t) dt + h(t,X_t,Y_t) d\langle B \rangle_t + \sigma(t,X_t,Y_t) dB_t, \\
				dY_t = -f(t,X_t,Y_t,Z_t) dt - g(t,X_t,Y_t,Z_t) d \langle B \rangle_t + Z_t dB_t + dK_t, \\
				X_{t_{N-1}} = x, \quad Y_{t_N} = u^0(t_N, X_{t_N}). 
			\end{array}
			\right.
		\end{equation*}
		By Theorem \ref{thm1}, the equation above admits a unique solution, and we can define function $u^1 \colon [t_{N-1},t_N] \times \mathbb R \to \mathbb R$ as before, which also satisfies $u^1(t_N,\cdot) = u^0(t_N,\cdot)$. From Proposition \ref{estimate}, we have $u_n\left|_{[t_{N-1},t_N]}\right. \to u^1$, which implies $|u^1(t,x) - u^1(t,y)| \leq M|x-y|$, for any $t \in [t_{N-1},t_N]$, and for any $x, y \in \mathbb R$. Also we have $u_n(t_{N-1},\cdot) \to u(t_{N-1},\cdot)$ uniformly. \par 
		In the same way, consider the FBGSDE 
		\begin{equation*}
			\left\{
			\begin{array}{l}
				dX_t = b(t,X_t,Y_t) dt + h(t,X_t,Y_t) d\langle B \rangle_t + \sigma(t,X_t,Y_t) dB_t, \\
				dY_t = -f(t,X_t,Y_t,Z_t) dt - g(t,X_t,Y_t,Z_t) d \langle B \rangle_t + Z_t dB_t + dK_t, \\
				X_{t_{N-2}} = x, \quad Y_{t_{N-1}} = u^1(t_{N-1}, X_{t_{N-1}}). 
			\end{array}
			\right.
		\end{equation*}
		By Theorem \ref{thm1} and Proposition \ref{estimate}, we can define $u^2 \colon [t_{N-2},t_{N-1}] \times \mathbb R \to \mathbb R$, $u^2(t_{N-1},\cdot) = u^1(t_{N-1},\cdot)$, with $|u^2(t,x) - u^2(t,y)| \leq M|x-y|$, for any $t \in [t_{N-2},t_{N-1}]$ and for any $x,y \in \mathbb R$. Moreover, $u_n(t_{N-2},\cdot) \to u(t_{N-2},\cdot)$ uniformly. \par 
		Repeat the procedure finite times, we are able to define a function on $[0,T] \times \mathbb R$ as follow, 
		\begin{equation*}
			u(t,x) \coloneqq \sum_{i=1}^{N} u^i(t,x) \mathbbm 1_{[t_{N-i},t_{N-i+1})}(t) + u^0(t,x) \mathbbm 1_{[t_N,T]}(t), 
		\end{equation*}
		which satisfies $Y_t^{t_i,x} = u(t,X_t^{t_i,x})$, for $t \in [t_i,t_{i+1}]$, $i = 0,\cdots,N$, and $u_n(t,\cdot) \to u(t,\cdot)$ uniformly. \par 
		Step 2. We construct process $(X^*,Y^*) \in S_G^2(0,T) \times S_G^2(0,T)$ with $Y_t = u(t,X_t)$ in this step. \par 
		Now we can solve the following FBGSDE, 
		\begin{equation*}
			\left\{
			\begin{array}{l}
				dX_t = b(t,X_t,Y_t) dt + h(t,X_t,Y_t) d\langle B \rangle_t + \sigma(t,X_t,Y_t) dB_t, \\
				dY_t = -f(t,X_t,Y_t,Z_t) dt - g(t,X_t,Y_t,Z_t) d \langle B \rangle_t + Z_t dB_t + dK_t, \\
				X_{t_0} = 0, \quad Y_{t_1} = u(t_1, X_{t_1}), 
			\end{array}
			\right.
		\end{equation*}
		whose solution we denote by $\{(X_t^{(0)},Y_t^{(0)},Z_t^{(0)},K_t^{(0)})\}_{t \in [t_0,t_1]}$. Then we can define $(X_t^*,Y_t^*) = (X_t^{(0)}, Y_t^{(0)})$ for $t \in [t_0,t_1]$. \par 
		Then the FBGSDE 
		\begin{equation*}
			\left\{
			\begin{array}{l}
				dX_t = b(t,X_t,Y_t) dt + h(t,X_t,Y_t) d\langle B \rangle_t + \sigma(t,X_t,Y_t) dB_t, \\
				dY_t = -f(t,X_t,Y_t,Z_t) dt - g(t,X_t,Y_t,Z_t) d \langle B \rangle_t + Z_t dB_t + dK_t, \\
				X_{t_1} = X_{t_1}^*, \quad Y_{t_2} = u(t_2, X_{t_2}), 
			\end{array}
			\right.
		\end{equation*}
		admits a unique solution $(X^{(1)},Y^{(1)},Z^{(1)},K^{(1)})$. Since $Y_t^{n,0,x} = u_n(t,X_t^{n,0,x})$, $u_n(t,\cdot) \to u(t,\cdot)$ uniformly, and $(X^{n,0,x},Y^{n,0,x}) \to (X^{(1)},Y^{(1)})$ in $S_G^2(t_1,t_2)$, which follows from Proposition \ref{estimate}, we have $Y_{t_1}^{(1)} = u(t_1,X_{t_1}^{(1)}) = Y_{t_1}^*$. Thus we can define $(X_t^*,Y_t^*) = (X_t^{(1)}, Y_t^{(1)})$ for $t \in [t_1,t_2]$, which is well-defined. \par 
		Similarly, let $(X^{(2)},Y^{(2)},Z^{(2)},K^{(2)})$ be the solution of the following FBGSDE, 
		\begin{equation*}
			\left\{
			\begin{array}{l}
				dX_t = b(t,X_t,Y_t) dt + h(t,X_t,Y_t) d\langle B \rangle_t + \sigma(t,X_t,Y_t) dB_t, \\
				dY_t = -f(t,X_t,Y_t,Z_t) dt - g(t,X_t,Y_t,Z_t) d \langle B \rangle_t + Z_t dB_t + dK_t, \\
				X_{t_2} = X_{t_2}^*, \quad Y_{t_3} = u(t_3, X_{t_3}), 
			\end{array}
			\right.
		\end{equation*}
		which satisfies $Y_t^{(2)} = u(t,X_t^{(2)})$. Hence we can define $(X_t^*,Y_t^*) = (X_t^{(2)}, Y_t^{(2)})$ for $t \in [t_2,t_3]$. \par 
		Repeat the procedure finite times, we obtain processes $X^*$ and $Y^*$ on $[0,T]$, which satisfies $Y_t^* = u(t,X_t^*)$. \par 
		Step 3. We construct $Z^*$ and $K^*$, then we show that $(X^*,Y^*,Z^*,K^*)$ is a solution to the FBGSDE (\ref{eq}). \par 
		It is obvious that $X^*$ is a continuous process satisfying the equation
		\begin{equation} \label{gsde}
			\left\{
			\begin{array}{l}
				dX_t = b(t,X_t,Y_t^*)dt + h(t,X_t,Y_t^*)d \langle B \rangle_t + \sigma(t,X_t,Y_t^*)dB_t, \\
				X_0 = x,  
			\end{array}
			\right. 
		\end{equation}
		and that 
		\begin{equation*}
			\Vert X^* \Vert_{S_G^2(0,T)} \leq \sum_{i=0}^{N} \Vert X^* \Vert_{S_G^2(t_i,t_{i+1})} < \infty, 
		\end{equation*}
		which means $X^*$ is a solution to the above GSDE. By the solvability of BGSDE (see \cite{bsde}), the following equation admits a unique solution $(\tilde{Y},\tilde{Z},\tilde{K})$, 
		\begin{equation} \label{bgsde}
			\left\{
			\begin{array}{l}
				dY_t = -f(t,X_t^*,Y_t,Z_t)dt - g(t,X_t^*,Y_t,Z_t)d \langle B \rangle_t + Z_tdB_t + dK_t, \\
				Y_T = \Phi(X_T^*). 
			\end{array}
			\right.
		\end{equation}
		Since $\{(Y_t^*,Z_t^{(N)},K_t^{(N)})\}_{t \in [t_N,t_{N+1}]}$ solve the same BGSDE above on $[t_N,t_{N+1}]$, we have $Y^* = \tilde{Y}$ on $[t_N,t_{N+1}]$. By the argument before, we obtain that $\tilde{Y}_{t_N} = Y^*_{t_N} = u(t_N,X_{t_N}^*)$, which implies that $\{(\tilde{Y}_t,\tilde{Z}_t,\tilde{K}_t)\}_{t \in [0,t_N]}$ is the unique solution of the following GBSDE 
		\begin{equation*}
			\left\{
			\begin{array}{l}
				dY_t = -f(t,X_t^*,Y_t,Z_t)dt - g(t,X_t^*,Y_t,Z_t)d \langle B \rangle_t + Z_tdB_t + dK_t, \\
				Y_{t_N} = u(t_N,X_{t_N}^*). 
			\end{array}
			\right.
		\end{equation*}
		Similarly, since $\{(Y_t^*,Z_t^{(N-1)},K_t^{(N-1)})\}_{t \in [t_{N-1},t_N]}$ solves the equation above, we obtain $Y^* = \tilde{Y}$ on $[t_{N-1},t_N]$ and $\tilde{Y}_{t_{N-1}} = u(t_{N-1},X_{t_{N-1}}^*)$. \par 
		Repeat the procedure finite times, we obtain that $Y^* = \tilde{Y}$ on the whole $[0,T]$. Let $Z^* = \tilde{Z}$ and $K^* = \tilde{K}$, it is obvious that $(Y^*,Z^*,K^*)$ is the solution to GBSDE (\ref{bgsde}).	Combining (\ref{gsde}) and (\ref{bgsde}), we know that $(X^*,Y^*,Z^*,K^*)$ is a solution to FBGSDE (\ref{eq}). \par 
		Step 4. Finally, we prove the uniqueness of this solution. In fact, if $(X',Y',Z',K')$ is a solution to the FBGSDE (\ref{eq}), then by the arguments from Step 1 to Step 3, we know that $X'$ and $Y'$ must satisfy $Y'_t = u(t,X_t')$. Thus $X'$ solves the GSDE 
		\begin{equation*}
			\left\{
			\begin{array}{l}
				dX_t = b(t,X_t,u(t,X_t))dt + h(t,X_t,u(t,X_t))d \langle B \rangle_t + \sigma(t,X_t,u(t,X_t))dB_t, \\
				X_0 = x, 
			\end{array}
			\right. 
		\end{equation*}
		which implies that $X' = X^*$ by the arguments in Proposition \ref{smoothexist}. Furthermore, $(Y',Z',K')$ solves the BGSDE 
		\begin{equation*}
			\left\{
			\begin{array}{l}
				dY_t = -f(t,X^*,Y_t,Z_t)dt -g(t,X^*,Y_t,Z_t)d \langle B \rangle_t + Z_tdB_t + dK_t, \\
				Y_T = \Phi(X_T^*), 
			\end{array}
			\right.
		\end{equation*}
		which implies that $(Y',Z',K') = (Y^*,Z^*,K^*)$, by the unique solvability of BGSDEs (\cite{bsde}). Therefore, the uniqueness holds and we complete our proof. 
	\end{proof}
	
	\begin{remark}
		Theorem \ref{thm2} also implies that PDE (\ref{pde*}) admits a unique solution. In fact, if there are two solutions $u$ and $v$, following the argument in Proposition \ref{smoothexist}, $(X^u,Y^u,Z^u,K^u)$ and $(X^v,Y^v,Z^v,K^v)$ are both solutions to FBGSDE (\ref{eq}). Then by Theorem \ref{thm2}, the two solutions are the same, hence, we obtain $u(0,x) = Y_0^u = Y_0^v = v(0,x)$. Since $(0,x)$ can be replaced by any $(t,x) \in [0,T] \times \mathbb R$, by which we have $u \equiv v$, the uniqueness of the solution to PDE (\ref{pde*}) holds true. 
	\end{remark}
	
	Since the conclusions of this paper are inspired mainly by the idea of \cite{fbsdes}, and heavily depends on the results of fully nonlinear PDEs, there are still a lot to improve in this paper, we list some of them here. 
	
	\begin{itemize}
		\item We only consider the case that $b,h$ are independent of $z$ in this paper, for the situation of $b = b(t,x,y,z)$ and $h = h(t,x,y,z)$, do Theorem \ref{thm1} and Theorem \ref{thm2} hold true? 
		\item We require all the coefficients are continuous differentiable with their derivatives satisfying Lipschitz conditions, can this assumption be relaxed to the Lipschitz conditions of coefficients? 
		\item What if $\sigma$ does not satisfy uniformly elliptic condition, or can we get a similar conclusion without using those results of PDEs? 
	\end{itemize}
	
	In fact, the second problem is influenced by the results of PDEs we quote in this paper. If the $C^1$ norm of $u$, which is the solution to PDE (\ref{pde*}), is bounded by a constant only depending on the boundary of first derivatives of coefficients, which means it is independent of the second derivatives of coefficients, then Assumption \ref{h2} can be weaker as we wish. However, the first problem is much more complex, since we need a higher moment control in the Doob's inequality (Proposition \ref{doob}) under G-expectation. As a result, it is not easy to construct a contractive map $\mathcal{I}$ as before, as long as $b$ or $h$ depends on $z$. Actually, if $b$ or $h$ depends on $z$, Lemma \ref{solution} may not hold any longer, because the function $u_x \cdot b(t,x,u,u_x\cdot\sigma)$ or $u_x \cdot h(t,x,u,u_x\cdot\sigma)$ does not satisfy the inequality appearing in (\romannumeral 4) of Definition \ref{def-f}, and we cannot apply Theorem \ref{pthm} to prove Lemma \ref{solution}. Consequently, we cannot use the results of PDEs to construct solutions of a family of FBGSDEs to approach the solution of FBGSDE (\ref{eq}). 
	
	\bibliographystyle{unsrt}
	\bibliography{ref}
	
\end{document}